\documentclass[10pt]{amsart}

\textwidth=124mm

\textheight=186mm

\usepackage{amsmath,amsfonts}
\usepackage[english]{babel}

\newtheorem{theorem}{Theorem}[section]
\newtheorem{lemma}[theorem]{Lemma}

\newtheorem{proposition}[theorem]{Proposition}
\newtheorem{corollary}[theorem]{Corollary}

\newtheorem{question}[theorem]{Question}

\newtheorem{example}[theorem]{Example}
%\newproof{proof}{Proof}

\sloppy
\begin{document}

\title{On $\alpha$-embedded sets and extension of mappings}

\author{Olena Karlova}

%\ead{Maslenizza.Ua@gmail.com}

\address{ Chernivtsi National University,
                    Department of Mathematical Analysis,
                    Kotsjubyns'koho 2,
                    Chernivtsi 58012,
                    UKRAINE}

\begin{abstract}
We introduce and study $\alpha$-embedded sets and apply them to generalize the Kuratowski Extension Theorem.

\end{abstract}

\maketitle

\renewcommand{\theenumi}{\roman{enumi}}

\section{Introduction}

A subset $A$ of a topological space $X$ is called {\it functionally open} ({\it functionally closed}) if there exists a continuous function $f:X\to [0,1]$ such that $A=f^{-1}((0,1])$ ($A=f^{-1}(0)$).

Let ${\mathcal G}_0^*(X)$ and ${\mathcal
F}_0^*(X)$ be the collections of all functionally open and functionally closed subsets of a topological space $X$, respectively. Assume that the classes ${\mathcal G}_\xi^*(X)$ and ${\mathcal
F}_\xi^*(X)$ are defined for all $\xi<\alpha$, where
$0<\alpha<\omega_1$. Then, if $\alpha$ is odd, the class ${\mathcal G}_\alpha^*(X)$ (${\mathcal F}_\alpha^*(X)$)  consists of all countable intersections  (unions)
of sets of lower classes, and, if $\alpha$ is even, the class ${\mathcal G}_\alpha^*(X)$ (${\mathcal F}_\alpha^*(X)$)  consists of all countable unions
(intersections) of sets of lower classes. The classes  ${\mathcal F}_\alpha^*(X)$ for odd $\alpha$ and  ${\mathcal G}_\alpha^*(X)$ for even  $\alpha$ are said to be
{\it functionally additive}, and the classes ${\mathcal F}_\alpha^*(X)$
for even $\alpha$ and ${\mathcal G}_\alpha^*(X)$ for odd
$\alpha$ are called {\it functionally multiplicative}. If a set belongs to the $\alpha$'th functionally additive and to the $\alpha$'th functionally multiplicative class simultaneously, then it is called {\it functionally ambiguous of the $\alpha$'th class}. For every $0\le\alpha<\omega_1$ let
$$
{\mathcal B}_\alpha^*(X)={\mathcal F}_\alpha^*(X)\cup {\mathcal G}_\alpha^*(X)
$$
 and let
$$
{\mathcal B}^*(X)=\bigcup\limits_{0\le\alpha<\omega_1} {\mathcal B}_\alpha^*(X).
$$
If $A\in {\mathcal B}^*(X)$, then $A$ is said to be {\it a functionally measurable} set.

If $P$ is a property of mappings, then by $P(X,Y)$ we denote the collection of all mappings $f:X\to Y$ with the property $P$.  Let $P(X)$ ($P^*(X)$) be the collection of all real-valued (bounded) mappings on $X$ with a property $P$.

By the symbol $C$ we denote, as usually, the property of continuity.

Let $K_0(X,Y)=C(X,Y)$. For an ordinal $0<\alpha<\omega_1$ we say that a mapping \mbox{$f:X\to Y$}  {\it  belongs to the $\alpha$'th functional Lebesgue class}, $f\in K_\alpha(X,Y)$, if the preimage $f^{-1}(V)$ of an arbitrary open set $V\subseteq Y$ is of the $\alpha$'th functionally additive class in $X$.

A subspace $E$ of $X$ is {\it $P$-embedded} ({\it $P^*$-embedded}) in $X$ if every (bounded) function $f\in P(E)$ can be extended to a (bounded)  function $g\in P(X)$.

A subset $E$ of $X$ is said to be {\it $z$-embedded} in $X$ if every functionally closed set in $E$ is the restriction of a functionally closed set in $X$ to $E$. It is well-known that
\begin{center}
$E$ -- $C$-embedded $\Rightarrow$ $E$ -- $C^*$-embedded $\Rightarrow$ $E$ -- $z$-embedded.
\end{center}

Recall that sets $A$ and $B$ are {\it completely separated in $X$} if there exists a continuous function $f:X\to [0,1]$ such that $A\subseteq f^{-1}(0)$ and $B\subseteq f^{-1}(1)$.

The following theorem was proved in \cite[Corollary 3.6]{BlairHager}.

\begin{theorem}[Blair-Hager] A subset $E$ of a topological space $X$ is $C$-embedded in $X$ if and only if $E$ is $z$-embedded in $X$ and $E$ is completely separated from every functionally closed set in $X$ disjoint from $E$.
\end{theorem}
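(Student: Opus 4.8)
The plan is to prove the two implications separately, the forward one being routine.

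\textbf{($\Rightarrow$)} Assume $E$ is $C$-embedded in $X$. By the chain of implications recalled above it is then $z$-embedded, so only the separation property needs attention. Let $Z=h^{-1}(0)$ be functionally closed in $X$, where $h\colon X\to[0,1]$ is continuous, and suppose $Z\cap E=\emptyset$. Then $h>0$ on $E$, so $1/h$ restricts to a continuous real-valued function on $E$, which by $C$-embeddedness extends to a continuous $g\colon X\to\mathbb R$. The product $p=g\cdot h$ is continuous on $X$, equals $1$ on $E$ and equals $0$ on $Z$; truncating $p$ into $[0,1]$ produces a continuous function witnessing that $E$ and $Z$ are completely separated in $X$.

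\textbf{($\Leftarrow$)} Assume $E$ is $z$-embedded in $X$ and completely separated from every functionally closed subset of $X$ disjoint from $E$. First I would show that $E$ is $C^*$-embedded, and I expect this to be the crux of the argument. By the Urysohn-style successive approximation used in the proof of Tietze's theorem, it suffices to prove that any two sets $A,B$ completely separated in $E$ are completely separated in $X$. Choose a continuous $\phi\colon E\to[0,1]$ with $A\subseteq Z_A:=\phi^{-1}(0)$ and $B\subseteq Z_B:=\phi^{-1}(1)$; the sets $Z_A,Z_B$ are disjoint functionally closed subsets of $E$, so by $z$-embeddedness there are continuous $u,v\colon X\to[0,1]$ with $F_A:=u^{-1}(0)$, $F_B:=v^{-1}(0)$ satisfying $F_A\cap E=Z_A$ and $F_B\cap E=Z_B$. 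The obstacle is that $F_A$ and $F_B$ need not be disjoint in $X$; but $F_A\cap F_B$ is functionally closed in $X$ and meets $E$ in $Z_A\cap Z_B=\emptyset$, so by hypothesis there is a continuous $\mu\colon X\to[0,1]$ with $\mu\equiv1$ on $E$ and $\mu\equiv0$ on $F_A\cap F_B$. Put $\tilde u=u+(1-\mu)$ and $\tilde v=v+(1-\mu)$: then $\tilde u+\tilde v>0$ everywhere on $X$ (its vanishing at a point would force that point into $F_A\cap F_B$ and also force $\mu=1$ there), while $\tilde u=u$ and $\tilde v=v$ on $E$. Since $A$ is disjoint from $F_B$ and $B$ from $F_A$, the continuous function $w=\tilde u/(\tilde u+\tilde v)\colon X\to[0,1]$ takes the value $0$ on $A$ and $1$ on $B$, so $A$ and $B$ are completely separated in $X$, which is what the approximation scheme needs.

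Finally I would upgrade $C^*$-embeddedness to $C$-embeddedness using the separation hypothesis once more. Given any continuous $f\colon E\to\mathbb R$, compose with a homeomorphism $\tau\colon\mathbb R\to(-1,1)$ and extend the bounded function $\tau\circ f$ to a continuous $G\colon X\to[-1,1]$ by $C^*$-embeddedness. The set $Z=\{x\in X:|G(x)|=1\}=(1-G^2)^{-1}(0)$ is functionally closed in $X$ and disjoint from $E$, so there is a continuous $\lambda\colon X\to[0,1]$ equal to $1$ on $E$ and $0$ on $Z$; then $\lambda\cdot G$ maps $X$ into $(-1,1)$ and agrees with $\tau\circ f$ on $E$, whence $\tau^{-1}\circ(\lambda\cdot G)$ is the desired continuous extension of $f$. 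The only genuinely delicate step in the whole argument is the disjointification of $F_A$ and $F_B$ in the middle paragraph; everything else is truncation and bookkeeping.
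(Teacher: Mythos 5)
The paper does not prove this theorem: it is quoted from Blair and Hager (their Corollary~3.6) and used as a black box, so there is no internal proof to compare yours against. On its own merits your argument is correct and is essentially the classical one. The forward direction ($1/h$ extends, multiply back by $h$, truncate) is fine. In the converse, reducing $C^*$-embeddedness to the Urysohn extension criterion (completely separated sets in $E$ remain completely separated in $X$) is the standard route, and your disjointification of the zero-sets $F_A=u^{-1}(0)$ and $F_B=v^{-1}(0)$ via $\tilde u=u+(1-\mu)$, $\tilde v=v+(1-\mu)$ with $\mu$ separating $E$ from $F_A\cap F_B$ is exactly the right device: $\tilde u+\tilde v$ vanishes only where $u=v=0$ and $\mu=1$ simultaneously, which is impossible, and $\tilde u,\tilde v$ agree with $u,v$ on $E$, so $w=\tilde u/(\tilde u+\tilde v)$ separates $A$ from $B$. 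One small imprecision: the justification ``since $A$ is disjoint from $F_B$ and $B$ from $F_A$'' is not the fact you actually use; what makes $w\equiv 0$ on $A$ and $w\equiv 1$ on $B$ is that $A\subseteq Z_A\subseteq F_A=u^{-1}(0)$ and $B\subseteq Z_B\subseteq F_B=v^{-1}(0)$, together with $1-\mu\equiv 0$ on $E$ and the strict positivity of $\tilde u+\tilde v$. The final upgrade from $C^*$- to $C$-embedding (compose with $\tau:\mathbb R\to(-1,1)$, extend, and multiply by a function vanishing on the functionally closed set where $|G|=1$) is the standard Gillman--Jerison argument and is carried out correctly.
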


It is natural to consider $P$- and $P^*$-embedded sets if $P=K_\alpha$ for $\alpha>0$. In connection with this we introduce and study a class of $\alpha$-embedded sets which coincides with the class of $z$-embedded sets when $\alpha=0$.  In Section~3 we generalize the notion of completely separated sets to $\alpha$-separated sets. Section~4 deals with ambiguously $\alpha$-embedded sets which play the important role in the extension of bounded $K_\alpha$-functions. In the fifth section we prove an analog of the Tietze-Uryhson Extension Theorem for $K_\alpha$-functions. Section 6 concerns the question when $K_1$-embedded sets coincide with $K_1^*$-embedded sets. The seventh section presents a generalization of the Kuratowski Theorem \cite[p.~445]{Ku1} on extension of $K_\alpha$-mappings with values in Polish spaces.

\section{$\alpha$-embedded sets}

Let $0\le\alpha<\omega_1$. A subset $E$ of a topological space $X$ is  {\it $\alpha$-embedded} in $X$ if for any set $A$ of the $\alpha$'th functionally additive  (multiplicative) class in $E$ there is a set $B$ of the $\alpha$'th functionally additive (multiplicative) class in  $X$ such that $A=B\cap E$.

\begin{proposition}\label{alpha_in_alpha}
  Let  $X$ be a topological space, $0\le\alpha<\omega_1$  and let $E\subseteq X$ be an $\alpha$-embedded set of the $\alpha$'th functionally additive (multiplicative) class in $X$. Then every set of the $\alpha$'th functionally additive (multiplicative) class in $E$ belongs to the $\alpha$'th functionally additive (multiplicative) class in  $X$.
\end{proposition}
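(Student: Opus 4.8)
The plan is to reduce the statement to a closure property of the classes of the hierarchy, treating the functionally additive and the functionally multiplicative versions in parallel. Let $A$ be a set of the $\alpha$-th functionally additive (multiplicative) class in $E$. Since $E$ is $\alpha$-embedded in $X$, there is a set $B$ of the $\alpha$-th functionally additive (multiplicative) class in $X$ with $A=B\cap E$. By hypothesis $E$ itself belongs to the $\alpha$-th functionally additive (multiplicative) class in $X$, so $A=B\cap E$ is an intersection of two sets of that class in $X$, and everything comes down to the assertion that the $\alpha$-th functionally additive (multiplicative) class of an arbitrary space is closed under finite intersections.

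For the multiplicative class this is immediate. If $\alpha=0$ it is the standard fact that a countable intersection of functionally closed sets is functionally closed; and if $\alpha\ge 1$, the $\alpha$-th functionally multiplicative class consists, by definition, of all countable intersections of sets of lower classes, so a finite (indeed countable) intersection of such sets is again a countable intersection of sets of lower classes. Thus closure holds straight from the definition, and the multiplicative half of the proposition follows at once from the reduction above.

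For the additive class the corresponding closure statement is the only real point, and I would establish it by transfinite induction on $\alpha$. The base case $\alpha=0$ is the familiar fact that a finite intersection of functionally open sets is functionally open. For $\alpha\ge 1$, write two given sets of the $\alpha$-th functionally additive class as $\bigcup_n C_n$ and $\bigcup_m D_m$ with all $C_n,D_m\in\bigcup_{\xi<\alpha}{\mathcal B}^*_\xi(X)$; then $(\bigcup_n C_n)\cap(\bigcup_m D_m)=\bigcup_{n,m}(C_n\cap D_m)$, and the induction hypothesis — applied to the additive, multiplicative and ambiguous classes of all levels $<\alpha$, together with the monotonicity ${\mathcal B}^*_\beta(X)\subseteq{\mathcal B}^*_\gamma(X)$ for $\beta<\gamma$ — shows that each $C_n\cap D_m$ again lies in $\bigcup_{\xi<\alpha}{\mathcal B}^*_\xi(X)$, so the countable union $\bigcup_{n,m}(C_n\cap D_m)$ is of the $\alpha$-th functionally additive class. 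The main obstacle is precisely this induction: it has to be organized so that the additive, multiplicative and ambiguous classes are handled simultaneously at each level, and it relies on the basic monotonicity of the hierarchy of functionally measurable sets. These are standard properties of the hierarchy of functionally open/closed sets (valid in all topological spaces because cozero and zero sets form the relevant lattices), and once they are in place the reduction in the first paragraph finishes the proof in a couple of lines.
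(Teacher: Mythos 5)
Your proof is correct and takes essentially the same route as the paper's: both reduce the statement, via $A=B\cap E$ with $B$ supplied by $\alpha$-embeddedness and $E$ of the same class in $X$, to the closure of the $\alpha$'th functionally additive (multiplicative) class under finite intersections. The paper simply asserts that closure property (``as the intersection of two sets of the same class''), while you additionally sketch its standard proof by transfinite induction using the monotonicity of the hierarchy; this extra detail is sound and does not change the argument.
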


\begin{proof}
For a set $C$ of the $\alpha$'th functionally additive (multiplicative) class in $E$ we choose a set $B$ of the $\alpha$'th functionally additive (multiplicative) class in $X$ such that $C=B\cap E$. Then $C$ belongs to the $\alpha$'th functionally additive (multiplicative) class in $X$ as the intersection of two sets of the same class.
\end{proof}

\begin{proposition}\label{prop:examples}
 Let  $X$ be a topological space, $E\subseteq X$ and
 \begin{enumerate}
    \item $X$ is perfectly normal, or

    \item $X$ is completely regular and $E$ is its Lindel\"{o}f subset, or

    \item $E$ is a functionally open subset of $X$, or

    \item $X$ is a normal space and $E$ is its $F_\sigma$-subset,
 \end{enumerate}
 then $E$ is $0$-embedded in $X$.
\end{proposition}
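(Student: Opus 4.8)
The plan is to reduce the claim to a single statement about functionally closed sets and then dispatch the four hypotheses one at a time. First I would note that the functionally additive and functionally multiplicative halves of the definition of $0$-embeddedness are equivalent: if $A$ is functionally open in $E$ and $E\setminus A=B\cap E$ for some functionally closed $B\subseteq X$, then $A=(X\setminus B)\cap E$ with $X\setminus B$ functionally open in $X$. So it suffices to show that every functionally closed $A\subseteq E$ has the form $A=B\cap E$ with $B$ functionally closed in $X$. I would fix a continuous $g\colon E\to[0,1]$ with $A=g^{-1}(0)$ and in each case produce either the zero-set $B$ directly or its cozero complement.

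Cases (1) and (3) are short. In (1) a subspace of a perfectly normal space is perfectly normal, so $A$ is merely closed in $E$; then $B:=\operatorname{cl}_X A$ is closed in $X$, hence functionally closed since $X$ is perfectly normal, and $B\cap E=\operatorname{cl}_E A=A$. In (3) I would write $E=h^{-1}((0,1])$ for a continuous $h\colon X\to[0,1]$ and set $\varphi:=\min\{h|_E,g\}$ on $E$ and $\varphi:=0$ on $X\setminus E$; the only point needing care is continuity of $\varphi$ at points of $\operatorname{cl}_X E\setminus E$, which follows from $0\le\varphi\le h$ on $E$ together with the vanishing of $h$ at such points. Since $h>0$ on $E$, one gets $\varphi^{-1}(0)\cap E=g^{-1}(0)=A$, so $B:=\varphi^{-1}(0)$ works.

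For (2) I would work with the cozero set $U:=E\setminus A=\bigcup_n F_n$, where $F_n:=\{x\in E:g(x)\ge 1/n\}$ is closed in $E$, hence Lindel\"of. Given $x\in F_n$, the open set $\{y\in E:g(y)>1/n\}$ equals $W\cap E$ for some open $W\subseteq X$, and complete regularity of $X$ furnishes a functionally open $V_x$ with $x\in V_x\subseteq W$, so that $V_x\cap E\subseteq U$. Covering each Lindel\"of set $F_n$ by countably many such $V_x$ and letting $U'$ be the union of all these sets over all $n$, I obtain a countable union of functionally open sets, so $U'$ is functionally open in $X$; moreover $U\subseteq U'$ while $U'\cap E\subseteq U$, hence $U'\cap E=U$ and $B:=X\setminus U'$ is as wanted.

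The step I expect to be the main obstacle is (4), since $A$ need not be closed in $X$, so one cannot separate it off by a single zero-set. Here I would write $E=\bigcup_n F_n$ with $F_n$ closed in $X$ and increasing, and exploit the fact that $\operatorname{cl}_X A\cap E=A$ (because $A$ is closed in $E$): the function equal to $g$ on $F_n$ and to $0$ on $\operatorname{cl}_X A$ is well defined and continuous on the closed set $F_n\cup\operatorname{cl}_X A$, so by the Tietze extension theorem (valid as $X$ is normal) it extends to a continuous $g_n\colon X\to[0,1]$ vanishing on $\operatorname{cl}_X A$. Then $G:=\sum_n 2^{-n}g_n$ is continuous, vanishes on $A$, and satisfies $G(x)\ge 2^{-m}g(x)>0$ whenever $x\in F_m\setminus A$; therefore $B:=G^{-1}(0)$ is functionally closed in $X$ with $B\cap E=A$, which completes all four cases.
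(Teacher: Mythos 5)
Your proof is correct and, in cases (1)--(3), is essentially the paper's argument transposed to zero-sets: the paper extends a functionally open $G\subseteq E$ (via Vedenissoff's theorem in the perfectly normal case, a Lindel\"{o}f subcover by functionally open sets in the completely regular case, and the product $\varphi\cdot\psi$ of the two witnessing functions in the functionally open case), while you extend the complementary functionally closed set; your opening observation that the two halves of the definition of $0$-embeddedness are interchangeable under complementation is exactly the reduction the paper makes implicitly, and your $\min\{h|_E,g\}$ plays the role of the paper's $\varphi\cdot\psi$. Case (4) is where you genuinely diverge: the paper writes the cozero set $G$ as a countable union of sets closed in $X$ and separates each from the complement of an ambient open set by Urysohn functions, whereas you decompose $E$ itself as $\bigcup_n F_n$, extend $g$ by Tietze from each closed set $F_n\cup\operatorname{cl}_X A$ while forcing it to vanish on $\operatorname{cl}_X A$, and sum $\sum_n 2^{-n}g_n$ into a single zero-set witness; your route avoids having to note that $G$ is $F_\sigma$ in $X$, at the cost of invoking Tietze rather than just Urysohn, and both are routine uses of normality. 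One slip to repair in case (2): a point $x\in F_n=\{g\ge 1/n\}$ with $g(x)=1/n$ does not belong to $\{y\in E:g(y)>1/n\}=W\cap E$, hence not to $W$, so complete regularity cannot place $V_x$ inside that $W$; either take $W$ to be any open subset of $X$ with $W\cap E=E\setminus A$ (which does contain $x$), or use the level set $\{y\in E:g(y)>1/(n+1)\}\supseteq F_n$ instead. With that one-line fix all four cases are complete.
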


\begin{proof}
Let $G$ be  a functionally open set in $E$.

{\it (i)}. Choose an open set $U$ in $X$ such that $G=E\cap U$. Then  $U$ is functionally open in $X$ by Vedenissoff's theorem~\cite[p.~45]{Eng}.

{\it (ii)}. Let $U$ be an open set in $X$ such that $G=E\cap U$. Since $X$ is completely regular, $U=\bigcup\limits_{s\in S}U_s$, where $U_s$ is a functionally open set in  $X$ for each $s\in S$. Notice that $G$ is Lindel\"{o}f, provided $G$ is $F_\sigma$ in the Lindel\"{o}f space $E$~\cite[p.~192]{Eng}. Then there exists a countable set $S_0\subseteq S$ such that $G\subseteq \bigcup\limits_{s\in S_0} U_s$. Let $V=\bigcup\limits_{s\in S_0} U_s$. Then $V$ is functionally open in $X$ and $V\cap E=G$.

{\it (iii)}. Consider continuous functions $\varphi:E\to [0,1]$ and $\psi:X\to [0,1]$ such that \mbox{$G=\varphi^{-1}((0,1])$} and $E=\psi^{-1}((0,1])$.  For each
      $x\in X$ we set
      $$
      f(x)=\left\{\begin{array}{ll}
                    \varphi(x)\cdot \psi(x), & x\in E, \\
                    0, & x\in X\setminus E.
                  \end{array}
      \right.
      $$
 Since $\varphi(x)\cdot \psi(x)=0$ on $\overline{E}\setminus E$, $f:X\to [0,1]$ is continuous. Moreover, $G=f^{-1}((0,1])$. Hence, the set $G$ is functionally open in $X$.

{\it (iv)}. Let $\tilde{G}$ be an open set in $X$ such that $G=\tilde G\cap E$. Since $G$ is functionally open in $E$, $G$ is $F_\sigma$ in $E$. Consequently, $G$ is $F_\sigma$ in $X$, provided $E$ is $F_\sigma$ in $X$. Since $X$ is normal, for every $n\in\mathbb N$ there exists a continuous function $f_n:X\to [0,1]$ such that $f_n(x)=1$ if $x\in F_n$ and $f_n(x)=0$ if $x\in X\setminus\tilde G$. Then the set $V=\bigcup\limits_{n=1}^\infty f_n^{-1}((0,1])$ is functionally open in $X$ and $V\cap E=G$.
\end{proof}

Examples~\ref{Nemytzki} and~\ref{Alex} show that none of the conditions (i)--(iv) on $X$ and  $E$ in Proposition~\ref{prop:examples} can be weaken.

Recall that a topological space $X$ is said to be {\it perfect} if every its closed subset is~$G_\delta$ in $X$.

\begin{example}\label{Nemytzki}
  There exist a perfect completely regular space $X$ and its functionally closed subspace $E$ which is not $\alpha$-embedded in $X$ for every $0\le\alpha<\omega_1$.

  Consequently, there is a bounded continuous function on $E$ which cannot be extended to a $\mathcal K_\alpha$-function for every $\alpha$.
\end{example}

\begin{proof}
  Let $X$ be the Niemytski plane~\cite[p.~22]{Eng}, i.e. $X=\mathbb R\times [0,+\infty)$, where a base of neighborhoods of $(x,y)\in X$ with $y>0$ form open balls with the center in $(x,y)$, and a base of neighborhoods of $(x,0)$ form the sets $U\cup\{(x,0)\}$, where $U$ is an open ball which tangent to $\mathbb R\times \{0\}$ in the point $(x,0)$.
  It is well-known that the space $X$ is perfect and completely regular,  but is not normal.

  Denote $E=\mathbb R\times \{0\}$. Since the function $f:X\to \mathbb R$, $f(x,y)=y$, is continuous and  $E=f^{-1}(0)$, the set $E$ is functionally closed in
  $X$.

  Notice that every function $f:E\to\mathbb R$ is continuous. Therefore, \mbox{$|{\mathcal B}_\alpha^*(E)|=2^{2^{\omega_0}}$} for every   $0\le\alpha<\omega_1$. On the other hand,  $|{\mathcal B}_\alpha^*(X)|=2^{\omega_0}$ for every $0\le\alpha<\omega_1$, provided the space $X$ is separable. Hence, for every $0\le\alpha<\omega_1$ there exists a set $A\in {\mathcal B}_\alpha^*(E)$ which can not be extend to a set $B\in {\mathcal B}_\alpha^*(X)$.

  Observe that a function $f:E\to [0,1]$, such that $f=1$ on $A$ and $f=0$ on $E\setminus A$, is continuous on $E$. But there is no $K_\alpha$-function $f:X\to [0,1]$ such that $g|_E=f$, since otherwise the set $B=g^{-1}(1)$ would be an extension of $A$.
\end{proof}

\begin{example}\label{Alex}
  There exist a compact Hausdorff space $X$ and its open subspace $E$ which is not $\alpha$-embedded in $X$ for every $0\le\alpha<\omega_1$.
\end{example}

\begin{proof}
  Let $X=D\cup\{\infty\}$ be the Alexandroff compactification of an uncountable discrete space $D$ \cite[p.~169]{Eng} i $E=D$. Fix $0\le\alpha<\omega_1$ and choose an arbitrary uncountable set $A\subseteq E$ with uncountable complement $X\setminus A$.  Evidently, $A$ is functionally closed in $E$. Assume that there is a set $B$ of the $\alpha$'th functionally multiplicative class in $X$ such that $A=B\cap E$. Clearly, $B=A\cup\{\infty\}$. Moreover, there exists a function $f:X\to \mathbb R$ of the $\alpha$'th Baire class such that  $B=f^{-1}(0)$ \cite[Lemma 2.1]{AGT}. But every continuous function on  $X$, and consequently every Baire function of the class $\alpha$ on $X$ satisfies the equality $f(x)=f(\infty)$ for all but countably many points $x\in X$, which implies a contradiction.
\end{proof}

\begin{proposition}\label{0_imply_alpha}
  Let $0\le\alpha\le\beta<\omega_1$ and let $X$ be a topological space. Then every $\alpha$-embedded subset of $X$ is $\beta$-embedded.
\end{proposition}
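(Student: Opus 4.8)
The plan is a transfinite induction on $\beta$. For $\beta=\alpha$ there is nothing to prove, so fix $\beta$ with $\alpha<\beta<\omega_1$ and assume, as inductive hypothesis, that $E$ is $\gamma$-embedded for every $\gamma$ with $\alpha\le\gamma<\beta$; from this I shall deduce that $E$ is $\beta$-embedded.

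Let $A$ be a set of the $\beta$'th functionally additive class in $E$. Since $\beta\ge 1$, directly from the definition of this class $A=\bigcup_{n}A_n$, where each $A_n$ belongs to $\mathcal B^*_{\gamma_n}(E)$ for some $\gamma_n<\beta$, i.e. $A_n$ is of the $\gamma_n$'th functionally additive or multiplicative class in $E$. I extend the pieces separately. If $\alpha\le\gamma_n$, then $\gamma_n\in[\alpha,\beta)$, so by the inductive hypothesis $E$ is $\gamma_n$-embedded and $A_n=B_n\cap E$ for some $B_n$ of the corresponding ($\gamma_n$'th additive or multiplicative) class in $X$. If $\gamma_n<\alpha$, then $A_n$, being a degenerate countable union (and, just as well, intersection) of copies of itself, is simultaneously of the $\alpha$'th functionally additive and of the $\alpha$'th functionally multiplicative class in $E$; since $E$ is $\alpha$-embedded, $A_n=B_n\cap E$ for some $B_n\in\mathcal B^*_{\alpha}(X)$. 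In both cases $B_n$ has class $\delta_n<\beta$, so $B=\bigcup_n B_n$ is a countable union of sets of classes below $\beta$, hence a set of the $\beta$'th functionally additive class in $X$, and $B\cap E=\bigcup_n(B_n\cap E)=\bigcup_n A_n=A$. The multiplicative case is entirely symmetric: represent a set $A$ of the $\beta$'th functionally multiplicative class in $E$ as $\bigcap_n A_n$ with the $A_n$ of classes below $\beta$, extend each $A_n$ to $B_n$ exactly as above, and put $B=\bigcap_n B_n$. This closes the induction.

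I do not expect a genuine obstacle here; the whole argument is bookkeeping inside the Borel-type hierarchy. The one place asking for a little care is the case $\gamma_n<\alpha$, which relies on the ``absorption'' remark that every set of a class below $\alpha$ already lies in both the $\alpha$'th additive and the $\alpha$'th multiplicative class --- this is precisely why a single $\alpha$-embeddedness hypothesis is enough to feed the induction. Combined with the equally routine fact that a countable union (intersection) of sets of classes $<\beta$ is of the $\beta$'th functionally additive (multiplicative) class, this supplies all the structural input the proof requires.
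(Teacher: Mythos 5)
Your proof is correct and follows essentially the same route as the paper's: transfinite induction on $\beta$, decomposing a $\beta$-class set into countably many pieces of lower classes, extending each piece via the inductive hypothesis, and reassembling by union (resp.\ intersection). The only difference is cosmetic: you spell out the absorption of pieces of class $\gamma_n<\alpha$ into the class $\alpha$ and treat the multiplicative case explicitly, both of which the paper leaves implicit.
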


\begin{proof} Let $E$ be an $\alpha$-embedded subset of $X$. If $\beta=\alpha$, the assertion of the proposition if obvious.
  Suppose the assertion is true for all  $\alpha\le\beta<\xi$ and let $A$ be a set of the $\xi$'th functionally additive class in $E$. Then there exists a sequence of sets $A_n$ of functionally multiplicative classes $<\xi$ in $E$ such that
$A=\bigcup\limits_{n=1}^\infty A_n$. According to the assumption, for every $n\in\mathbb N$ there is a set $B_n$ of a functionally multiplicative class $<\xi$ in
$X$ such that $A_n=B_n\cap E$. Then the set $B=\bigcup\limits_{n=1}^\infty B_n$ belongs to the $\xi$'th functionally additive class in $X$ and
$A=B\cap E$.
\end{proof}

The inverse proposition is not true, as the following result shows.

\begin{theorem}\label{exist_alpha}
  There exist a completely regular space $X$ and its $1$-embedded subspace $E\subseteq X$ which is not $0$-embedded in $X$.
\end{theorem}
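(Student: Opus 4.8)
The plan is to take for $X$ the Tychonoff plank, i.e.\ the subspace
\[
X=\bigl([0,\omega_1]\times[0,\omega]\bigr)\setminus\{(\omega_1,\omega)\}
\]
of the compact Hausdorff space $[0,\omega_1]\times[0,\omega]$ (ordinals carrying the order topology); being an open subspace of a compact Hausdorff space, $X$ is completely regular. Put $A=\{\omega_1\}\times[0,\omega)$, $B=[0,\omega_1)\times\{\omega\}$ and $E=A\cup B$. The first step is to record the standard properties of this configuration: $A$ and $B$ are disjoint, closed in $X$, and each is clopen in $E$; $A$ is a countable relatively discrete subset of $X$, while $B$ is homeomorphic to $[0,\omega_1)$; $B$ is a functionally closed subset of $X$, namely the zero set of the continuous map sending $(\gamma,k)$ to $\frac{1}{k+1}$ for $k<\omega$ and to $0$ for $k=\omega$, and moreover $B$ is $C^{*}$-embedded --- hence $z$-embedded --- in $X$, because its closure in $[0,\omega_1]\times[0,\omega]$ is a copy of $\beta[0,\omega_1)$; finally, since continuous real functions on $[0,\omega_1]$ are eventually constant, $A$ and $B$ are \emph{not} completely separated in $X$.

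Granting these facts, $E$ is not $0$-embedded: being clopen in $E$, the set $A$ is functionally closed in $E$; but if $A=Z\cap E$ for some functionally closed $Z\subseteq X$, then $Z$ and $B$ would be disjoint functionally closed subsets of $X$, hence completely separated, so that $A\subseteq Z$ would be completely separated from $B$ --- a contradiction.

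For the $1$-embeddedness I would first reduce the problem to the statement that every functionally closed subset $C$ of $E$ is the trace on $E$ of a set of the first functionally additive class of $X$ (briefly, of a ``functionally $F_\sigma$'' set): indeed, a set of the first functionally additive class of $E$ is a countable union of functionally closed subsets of $E$, so one extends it by taking the union of the extensions, and the first functionally multiplicative case follows by passing to complements in $E$ and in $X$. So let $C\subseteq E$ be functionally closed and split it as $C=(C\cap A)\sqcup(C\cap B)$, which is legitimate because $A$ and $B$ are clopen in $E$. The functionally closed set $C\cap B\subseteq B$ extends, by $z$-embeddedness of $B$, to a functionally closed $\widetilde Z\subseteq X$ with $\widetilde Z\cap B=C\cap B$; intersecting $\widetilde Z$ with a functionally closed $Y\subseteq X$ that contains $C\cap B$ and misses the countable set $A$ --- for $Y$ take the countable intersection, over $p\in A$, of functionally closed neighbourhoods of $C\cap B$ avoiding $p$, provided by complete regularity --- produces a functionally closed subset of $X$ whose trace on $E$ is $C\cap B$. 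For $C\cap A$, an at most countable subset of $A$, choose for each $p\in C\cap A$, again by complete regularity, a functionally closed $Z^{(p)}\subseteq X$ with $p\in Z^{(p)}$ and $Z^{(p)}\cap\bigl(B\cup(A\setminus\{p\})\bigr)=\emptyset$; then $\bigcup_{p\in C\cap A}Z^{(p)}$ is functionally $F_\sigma$ in $X$ and its trace on $E$ is $C\cap A$. The union of the two extensions is then a functionally $F_\sigma$ subset of $X$ restricting to $C$, which finishes the proof.

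The step I expect to be most delicate --- and which dictates the shape of the construction --- is the bookkeeping that keeps every extension in the first functionally additive class: this is precisely why $A$ must be \emph{countable}, so that the extension of $C\cap A$ is a \emph{countable} union of functionally closed sets (and not something of higher class) and so that one can trim an extension of $C\cap B$ away from $A$ by a single functionally closed set. One should also keep in mind that $C\cap A$ itself is usually not of any finite functionally additive class in $X$ (the points of $A$ are not even $G_\delta$ in $X$), so it is only the trace on $E$ that can be controlled; and the whole failure of $0$-embeddedness rests on the non-normality of the plank, i.e.\ on the impossibility of completely separating $A$ from $B$, which has to be established carefully.
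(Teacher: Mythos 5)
Your proof is correct, and it runs on a genuinely different example from the paper's. The paper takes $X=[0,1]\times\prod_{s\in(0,1]}\mathbb N$ and $E=B_1\cup B_2$ with $B_1=(0,1]\times F_1$, $B_2=\{0\}\times F_2$ for suitable closed $F_1,F_2$; there the positive half ($1$-embeddedness) rests on each $B_i$ being $0$-embedded in $X$, which is obtained via the normality of the $\Sigma$-product (Kombarov--Malykhin) and Corson's theorem, followed by separating $B_1$ from $B_2$ inside $X$ by the functionally open set $(0,1]\times Y$ and the functionally closed set $\{0\}\times Y$; the negative half uses the countable-support property of continuous functions on products. Your Tychonoff-plank version has the same architecture --- $E$ is the union of two clopen pieces, each of which extends well individually but which cannot be completely separated in the non-normal ambient space --- but the supporting facts are the classical ones about $[0,\omega_1]$ (eventual constancy of continuous functions, hence $C^*$-embeddedness of the top edge and non-separation of the two edges), and the $1$-embeddedness comes from the countability of the edge $A$: each point of $A$ is cut out of $E$ by a single zero set of $X$, and an extension of a zero set of $B$ is trimmed away from $A$ by a countable intersection of zero sets. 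This makes your argument more elementary and self-contained (no $\Sigma$-products, no appeal to Corson or Kombarov--Malykhin); the only ingredients you leave implicit are routine --- that countable intersections of functionally closed sets are functionally closed, that disjoint functionally closed sets are completely separated, and the standard non-normality argument for the plank, which you correctly flag as the point to write out carefully.
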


\begin{proof}
Let $X_0=[0,1]$, $X_s=\mathbb N$ for every $s\in (0,1]$, $Y=\prod\limits_{s\in (0,1]}X_s$ and
$$
X=[0,1]\times Y=\prod\limits_{s\in [0,1]}X_s.
$$
Then $X$ is completely regular as a product of completely regular spaces $X_s$.
Let
$$
A_1=(0,1]\quad\mbox{and}\quad A_2=\{0\}.
$$
For $i=1,2$ we consider the set
$$
F_i=\bigcap\limits_{n\ne i}\{y=(y_s)_{s\in (0,1]}\in Y: |\{s\in (0,1]: y_s=n\}|\le 1\}.
$$
Obviously, $F_1\cap F_2=\O$ and the sets $F_1$ and $F_2$ are closed in $Y$.

Let
$$
B_1=A_1\times F_1, \quad B_2=A_2\times F_2\,\,\,\mbox{and} \,\,\,E=B_1\cup B_2.
$$
It is easy to see that the sets $B_1$ and $B_2$ are closed in $E$, and consequently they are functionally clopen in  $E$.

\bigskip
{\it Claim 1.} {\it The set $B_i$ is $0$-embedded in $X$ for every $i=1,2$.}

{\it Proof.} Let $C$ be a functionally open set in $B_1$.

Let us consider the set
$$
H=\{x=(x_s)_{s\in [0,1]}\in X: |\{s\in [0,1]: x_s\ne 1\}|\le\aleph_0\}.
$$
Then the set $[0,1]\times F_i$ is closed in $H$ for every $i=1,2$. Since  $H$ is the $\Sigma$-product of the family $(X_s)_{s\in [0,1]}$ (see
\cite[p.~118]{Eng}), according to \cite{KombMal} the space $H$ is normal. Consequently, $[0,1]\times F_i$ is normal as closed subspace of normal space for every $i=1,2$. Clearly, $B_1$ is functionally open in $[0,1]\times F_1$. Hence, $B_1$ is $0$-embedded in $[0,1]\times F_1$ according to Proposition~\ref{prop:examples}(iii).
 Then $C$ is functionally open in  $[0,1]\times F_1$ by Proposition~\ref{alpha_in_alpha}.
Notice that   the set $[0,1]\times F_1$ is $0$-embedded in $H$ by Propositions~\ref{prop:examples} (iv). Hence, there exists a functionally open set $C'$ in $H$ such that \mbox{$C'\cap ([0,1]\times F_1)=C$}. It follows from \cite{Cors} that $H$ is $0$-embedded in $X$. Then there exists a functionally open set $C''$  in $X$ such that $C''\cap H=C'$. Evidently, $C''\cap B_1=C$. Therefore, the set $B_1$ in  $0$-embedded in $X$.

Analogously, it can be shown that the set $B_2$ is  $0$-embedded  in $X$, using the fact that $B_2$ is $0$-embedded in $[0,1]\times F_2$ according to Proposition~\ref{prop:examples}(iv).

\bigskip
{\it Claim 2.} {\it  The set $E$ is not $0$-embedded in $X$.}

{\it Proof.} Assuming the contrary, we choose a functionally closed set $D$  in $X$ such that
$D\cap E=B_1$. Then $D=f^{-1}(0)$ for some continuous function $f:X\to [0,1]$.
It follows from \cite[p.~117]{Eng} that there exists a countable set $S=\{0\}\cup T$, where $T\subseteq (0,1]$, such that for any $x=(x_s)_{s\in [0,1]}$ and
$y=(y_s)_{s\in [0,1]}$ of $X$ the equality $x|_{S}=y|_S$ implies $f(x)=f(y)$.
Let $y_0\in Y$ be such that $y_0|_{T}$ is a sequence of different natural numbers which are not equal to $1$ or $2$. We choose  $y_1\in F_1$ and $y_2\in F_2$ such that $y_0|_T=y_1|_T=y_2|_T$. Then $$f(a,y_0)=f(a,y_1)=f(a,y_2)$$ for all $a\in [0,1]$.
We notice that $f(0,y_1)=0$. Therefore, $f(0,y_0)=0$. But $f(a,y_2)>0$ for all $a\in A_2$. Then $f(a,y_0)>0$ for all $a\in A_2$. Hence, \mbox{$A_1=(f^{y_0})^{-1}(0)$}, where $f^{y_0}(a)=f(a,y_0)$ for all $a\in [0,1]$, and $f^{y_0}$ is continuous. Thus, the set $A_1=(0,1]$ is closed  in $[0,1]$, which implies a contradiction.

\bigskip
{\it Claim 3.} {\it The set $E$ is $1$-embedded in $X$.}

{\it Proof.} Let $C$ be a functionally $G_\delta$-set in $E$. We put
$$
E_1=A_1\times Y,\quad E_2=A_2\times Y.
$$
Then the set $E_1$ is functionally open in $X$ and the set $E_2$ is functionally closed in~$X$.
For $i=1,2$ let $C_i=C\cap B_i$. Since for every $i=1,2$ the set $C_i$ is functionally $G_\delta$ in the $0$-embedded in $X$ set $B_i$, by Proposition~\ref{0_imply_alpha} there exists a functionally $G_\delta$-set $\tilde C_i$  in $X$ such that $\tilde C_i\cap
B_i=C_i$.  Let
$$
\tilde C=(\tilde C_1\cap E_1)\cup (\tilde C_2\cap E_2).
$$
Then $\tilde C$ is functionally $G_\delta$  in $X$ and $\tilde C\cap E=C$.
\end{proof}

\section{$\alpha$-separated sets and $\alpha$-separated spaces}

Let $0\le\alpha<\omega_1$. Subsets $A$ and $B$ of a topological space $X$ are said to be {\it $\alpha$-separated} if there exists a function $f\in K_\alpha(X)$ such that
$$
A\subseteq  f^{-1}(0)\quad\mbox{and}\quad  B\subseteq f^{-1}(1).
$$
Remark that $0$-separated sets are also called  {\it completely separated} \cite[p.~42]{Eng}.

\begin{lemma}[Lemma 2.1 \cite{K4}]\label{8Lemma21}
  Let $X$ be a topological space, $\alpha>0$ and let $A\subseteq X$ be a subset of the $\alpha$'th functionally additive class. Then there exists a sequence $(A_n)_{n=1}^\infty$ such that each $A_n$ is functionally ambiguous of the class $\alpha$ in $X$, \mbox{$A_n\cap A_m=\O$} for $n\ne m$ and $A=\bigcup\limits_{n=1}^\infty A_n$.
\end{lemma}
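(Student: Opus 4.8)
The plan is to run the classical disjointification argument in the setting of functional Borel classes. Since $\alpha>0$ and $A$ lies in the $\alpha$'th functionally additive class, it is a countable union of sets of lower classes; a routine reduction --- resting on the elementary fact that every set of class $<\alpha$ is a countable union of sets of functionally multiplicative classes of levels $<\alpha$ (trivial when the set is already of a multiplicative class, and a short transfinite induction otherwise, whose only non-formal input at the ground level is the identity $f^{-1}((0,1])=\bigcup_{k=1}^\infty f^{-1}([\tfrac1k,1])$, i.e.\ that a functionally open set is a countable union of functionally closed sets) --- lets me fix a representation
$$
A=\bigcup_{n=1}^\infty H_n,
$$
where each $H_n$ belongs to a functionally multiplicative class of some level $\mu_n<\alpha$.

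Next I would disjointify: put
$$
A_n=H_n\setminus\bigcup_{k=1}^{n-1}H_k=H_n\cap\bigcap_{k=1}^{n-1}\bigl(X\setminus H_k\bigr),\qquad n\in\mathbb N .
$$
Then $A_n\cap A_m=\O$ for $n\ne m$ and $\bigcup_{n=1}^\infty A_n=\bigcup_{n=1}^\infty H_n=A$, which gives the disjointness and the covering. It remains to check that each $A_n$ is functionally ambiguous of class $\alpha$. For this I would use the monotonicity of the classification: ${\mathcal B}_\mu^*(X)\subseteq {\mathcal F}_\alpha^*(X)\cap{\mathcal G}_\alpha^*(X)$ whenever $\mu<\alpha$, since a single set of class $\mu$ is simultaneously a trivial countable union and a trivial countable intersection of sets of lower classes. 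Hence each $H_k$ --- and therefore, complementation interchanging the $\alpha$'th additive and multiplicative classes, each $X\setminus H_k$ --- is functionally ambiguous of class $\alpha$, so $A_n$ is a finite intersection of sets that are functionally ambiguous of class $\alpha$.

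The argument thus reduces to the one genuine point, which I expect to be the main obstacle: the family of functionally ambiguous sets of class $\alpha$ is closed under finite intersections. Its ``multiplicative'' half is immediate, the multiplicative class at level $\alpha$ being a family of countable intersections and hence trivially stable under finite intersections. For the ``additive'' half one writes each $X\setminus H_k$ once more as a countable union of multiplicative sets of levels $<\alpha$, distributes the finite intersection $A_n=H_n\cap\bigcap_{k<n}(X\setminus H_k)$ over those countable unions, and notes that each resulting finite intersection of multiplicative sets of levels $<\alpha$ lies in a single multiplicative class of level $<\alpha$ (the finitely many levels involved have a maximum, at which the multiplicative class absorbs finite intersections), hence in the $\alpha$'th additive class; so $A_n$, a countable union of such sets, lies in the $\alpha$'th additive class as well. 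These stability properties of the functional Borel hierarchy run exactly parallel to the classical Borel case and, if preferred, may simply be quoted from \cite{K4}.
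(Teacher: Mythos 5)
Your proof is correct and takes essentially the same route as the paper's: represent $A$ as a countable union of sets of functionally multiplicative classes below $\alpha$ (each of which is functionally ambiguous of class $\alpha$) and then disjointify via $A_n=B_n\setminus\bigcup_{k<n}B_k$. The paper's two-line proof silently omits exactly the point you single out and verify --- that the functionally ambiguous sets of class $\alpha$ are closed under finite intersections, so the disjointified pieces stay ambiguous --- so your write-up is just a more detailed rendering of the same argument.
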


\begin{proof}
  Since $A$ belongs to the $\alpha$'th functionally additive class, $A=\bigcup\limits_{n=1}^\infty B_n$, where each $B_n$ belongs to the functionally multiplicative class $<\alpha$ in $X$. Therefore, each $B_n$ is functionally ambiguous of the class $\alpha$. Let $A_1=B_1$ and $A_n=B_n\setminus \bigcup\limits_{k<n} B_k$ for $n>1$. Then $(A_n)_{n=1}^\infty$ is the required sequence.
\end{proof}

\begin{lemma}[Lemma 2.2 \cite{K4}]\label{8Lemma22}
  Let $X$ be a topological space, $\alpha\ge 0$ and let $A_n$ belongs to the $\alpha$'th functionally additive class in $X$ for every $n\in\mathbb N$ with \mbox{$X=\bigcup\limits_{n=1}^\infty A_n$}. Then there exists a sequence $(B_n)_{n=1}^\infty$ of mutually disjoint functionally ambiguous sets of the class $\alpha$ in $X$ such that $B_n\subseteq A_n$ and $X=\bigcup\limits_{n=1}^\infty B_n$.
\end{lemma}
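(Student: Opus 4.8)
The plan is to first split each $A_n$ into countably many functionally ambiguous pieces, then throw all these pieces together and disjointify the whole countable family, and finally regroup the disjoint pieces so that the $n$-th group lies inside $A_n$. The step that makes the regrouping succeed is that the resulting sets will partition the \emph{whole} space $X$.

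First I would apply Lemma~\ref{8Lemma21} to each $A_n$ (this is where $\alpha\ge 1$ is used) and write $A_n=\bigcup_{k=1}^\infty A_{n,k}$ with each $A_{n,k}$ functionally ambiguous of the class $\alpha$ in $X$. Fixing a bijection $j\mapsto(n_j,k_j)$ from $\mathbb N$ onto $\mathbb N\times\mathbb N$ and setting $D_j=A_{n_j,k_j}$, I obtain a sequence $(D_j)_{j\ge 1}$ of functionally ambiguous sets of the class $\alpha$ with $\bigcup_j D_j=X$ and $D_j\subseteq A_{n_j}$. Then I would disjointify in the standard way: $E_1=D_1$ and $E_j=D_j\setminus\bigcup_{i<j}D_i$ for $j>1$. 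Since the family of functionally ambiguous sets of the class $\alpha$ in $X$ is closed under complements, finite unions and finite intersections (the $\alpha$'th functionally additive class is closed under finite intersections and countable unions, and the $\alpha$'th functionally multiplicative class is its complementary dual), each $E_j$ is again functionally ambiguous of the class $\alpha$; moreover the $E_j$ are pairwise disjoint, $\bigcup_j E_j=X$, and $E_j\subseteq A_{n_j}$.

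Finally, I would set $B_n=\bigcup\{E_j : n_j=n\}$ for each $n$. Then the $B_n$ are pairwise disjoint, $X=\bigcup_{n}B_n$, and $B_n\subseteq A_n$. As a countable union of functionally ambiguous sets of the class $\alpha$, $B_n$ belongs to the $\alpha$'th functionally additive class; and since the $B_m$ partition $X$, the complement $X\setminus B_n=\bigcup_{m\ne n}B_m$ is also a countable union of sets of the $\alpha$'th functionally additive class, hence of that class itself, so $B_n$ belongs to the $\alpha$'th functionally multiplicative class as well. Thus each $B_n$ is functionally ambiguous of the class $\alpha$, and $(B_n)_{n=1}^\infty$ is the desired sequence.

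I expect this last observation to be the only real obstacle. One cannot disjointify the $A_n$ directly: $A_n\setminus\bigcup_{i<n}A_i$ is the intersection of an additive set of class $\alpha$ with finitely many multiplicative sets of class $\alpha$, which in general lies only in the ambiguous class $\alpha+1$. Passing first to ambiguous pieces via Lemma~\ref{8Lemma21}, and then using the partition identity $X\setminus B_n=\bigcup_{m\ne n}B_m$ to recover that $B_n$ is multiplicative of class $\alpha$, is exactly what keeps the construction inside class $\alpha$.
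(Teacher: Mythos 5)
Your proof is correct and follows essentially the same route as the paper's: decompose each $A_n$ into functionally ambiguous pieces of class $\alpha$ via Lemma~\ref{8Lemma21}, disjointify the doubly-indexed family along a bijection $\mathbb N^2\to\mathbb N$, regroup the pieces by their first index, and recover that each $B_n$ is functionally multiplicative from the partition identity $X\setminus B_n=\bigcup_{m\ne n}B_m$. You even flag, correctly, that the appeal to Lemma~\ref{8Lemma21} requires $\alpha\ge 1$ --- a restriction present in the paper's proof as well, though its statement of the lemma reads ``$\alpha\ge 0$''.
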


\begin{proof}
  If follows from Lemma~\ref{8Lemma21} that for every $n\in\mathbb N$ there exists a sequence $(F_{n,m})_{m=1}^\infty$ such that each $F_{n,m}$ is functionally ambiguous of the class $\alpha$ in $X$, \mbox{$F_{n,m}\cap F_{n,k}=\O$} for $m\ne k$ and $A_n=\bigcup\limits_{m=1}^\infty F_{n,m}$. Let $k:\mathbb N^2\to\mathbb N$ be a bijection. Set $$C_{n,m}=F_{n,m}\setminus \bigcup\limits_{k(p,s)<k(n,m)}F_{p,s}.$$ Evidently, $\bigcup\limits_{n,m=1}^\infty C_{n,m}=X$. Let $B_n=\bigcup\limits_{m=1}^\infty C_{n,m}$. Then $\bigcup\limits_{n=1}^\infty B_n=\bigcup\limits_{n=1}^\infty A_n=X$ and $B_n\subseteq\bigcup\limits_{m=1}^\infty F_{n,m}=A_n$. Notice that each $C_{n,m}$ is functionally ambiguous of the class $\alpha$. Therefore, $B_n$ belongs to the functionally additive class $\alpha$ for every $n$. Moreover, $B_n\cap B_m=\O$ for $n\ne m$. Since $X\setminus B_n=\bigcup\limits_{k\ne n}B_k$, $B_n$ is functionally ambiguous of the class $\alpha$.
\end{proof}

\begin{lemma}\label{lemma:preimage}
  Let $0\le\alpha<\omega_1$ and let $A$ be a subset of the $\alpha$'th functionally multiplicative class of a topological space $X$. Then there exists a function \mbox{$f\in K_\alpha^*(X)$} such that $A=f^{-1}(0)$.
\end{lemma}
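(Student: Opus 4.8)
The plan is to prove this directly from Lemma~\ref{8Lemma21}, treating the case $\alpha=0$ separately. If $\alpha=0$, then $A$ is functionally closed by the definition of the $0$'th functionally multiplicative class, so there is a continuous $f\colon X\to[0,1]$ with $A=f^{-1}(0)$; such an $f$ belongs to $C^*(X)=K_0^*(X)$, and the claim is proved.

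So suppose $\alpha\ge 1$. First I would pass to the complement $G:=X\setminus A$, which is of the $\alpha$'th functionally additive class, and apply Lemma~\ref{8Lemma21} to write $G=\bigcup_{n=1}^\infty G_n$, where the sets $G_n$ are pairwise disjoint and each $G_n$ is functionally ambiguous of class $\alpha$. Since $A$ together with the family $(G_n)_{n=1}^\infty$ forms a partition of $X$, I would define $f\colon X\to[0,1]$ by $f=\sum_{n=1}^\infty 2^{-n}\chi_{G_n}$, that is, $f(x)=2^{-n}$ for $x\in G_n$ and $f(x)=0$ for $x\in A$. Then $f$ is bounded, and $f^{-1}(0)=X\setminus\bigcup_{n=1}^\infty G_n=A$, so everything comes down to verifying that $f\in K_\alpha(X,\mathbb R)$.

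For the verification I would use that every open subset of $\mathbb R$ is a countable union of open intervals and that the $\alpha$'th functionally additive class is closed under finite intersections and countable unions; hence it suffices to check that $f^{-1}((c,+\infty))$ and $f^{-1}((-\infty,c))$ lie in that class for each $c\in\mathbb R$. These preimages are read off from the values of $f$: the set $\{x\in X:f(x)>c\}$ equals $X$, or $\emptyset$, or $G$ (in the case $c=0$), or a finite union $G_1\cup\dots\cup G_N$; and $\{x\in X:f(x)<c\}$ equals $X$, or $\emptyset$, or the complement of such a finite union. Using that a finite union of functionally ambiguous sets of class $\alpha$ — and therefore also the complement of such a union — is again functionally ambiguous of class $\alpha$, and that $G$ is of the $\alpha$'th functionally additive class by hypothesis, all of these sets are of the $\alpha$'th functionally additive class. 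Consequently $f\in K_\alpha^*(X)$ and $A=f^{-1}(0)$.

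The step I expect to be the crux is exactly this last verification, and the feature that makes it go through is the pairwise disjointness of the $G_n$ guaranteed by Lemma~\ref{8Lemma21}: for a general countable family the sum $\sum_n 2^{-n}\chi_{G_n}$ of functions of class $\alpha$ would only be of class $\alpha+1$, whereas disjointness forces every super-level and sub-level set of $f$ to be a finite Boolean combination of the $G_n$ (together with $G$ itself in one instance), so no additional countable operation is incurred.
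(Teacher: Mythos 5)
Your proof is correct and follows essentially the same route as the paper's: the same reduction via Lemma~\ref{8Lemma21} to a disjoint decomposition of $X\setminus A$ into functionally ambiguous sets of class $\alpha$, the same kind of step function vanishing exactly on $A$ (values $2^{-n}$ instead of $1/n$), and the same verification that preimages of open sets are functionally additive by reducing to finite Boolean combinations of the pieces. Your closing remark about why disjointness is essential matches the mechanism the paper relies on.
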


\begin{proof} For $\alpha=0$ the lemma implies from the definition of a functionally closed set.

Let $\alpha>0$. Since the set $B=X\setminus A$ is of the $\alpha$'th functionally additive class, there exists a sequence of functionally ambiguous sets $B_n$ of the $\alpha$'th class in $X$ such that $B=\bigcup\limits_{n=1}^\infty B_n$ and $B_n\cap B_m=\O$ for all $n\ne m$ by Lemma~\ref{8Lemma21}. Define a function $f:X\to [0,1]$,
$$
f(x)=\left\{\begin{array}{ll}
              0, & \mbox{if}\,\, x\in A, \\
              \frac 1n, & \mbox{if}\,\, x\in B_n.
            \end{array}
\right.
$$
Take an arbitrary open set $V\subseteq [0,1]$. If $0\not\in V$ then $f^{-1}(V)$ is of the $\alpha$'th functionally additive class as a union of at most countably many sets $B_n$. If $0\in V$ then there exists such a number $N$ that $\frac 1 n\in V$ for all $n> N$. Then
the set $X\setminus f^{-1}(V)=\bigcup\limits_{n=1}^{N} B_n$ belongs to the $\alpha$'th functionally multiplicative class. Hence, $f^{-1}(V)$ is of the $\alpha$'th functionally additive class in $X$. Therefore, $f\in K_\alpha^*(X)$.
\end{proof}

\begin{proposition}\label{lemma:preimage2}
   Let $0\le \alpha<\omega_1$ and let $X$ be a topological space. Then any two disjoint sets $A$ and $B$ of the $\alpha$'th functionally multiplicative class in $X$ are $\alpha$-separated.
\end{proposition}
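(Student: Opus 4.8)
The plan is to realise $A$ and $B$ as the zero sets of two bounded functions of class $K_\alpha$ and then to combine these functions by the standard quotient construction. Applying Lemma~\ref{lemma:preimage} to $A$ and to $B$, I would choose $f_1,f_2\in K_\alpha^*(X)$ with $A=f_1^{-1}(0)$ and $B=f_2^{-1}(0)$; examining its proof one may take $f_1,f_2$ to be $[0,1]$-valued, so in particular $f_1,f_2\ge 0$. Since $A\cap B=\O$, at each point $x\in X$ at least one of $f_1(x),f_2(x)$ is positive, hence $f_1(x)+f_2(x)>0$, and the map $f\colon X\to[0,1]$ given by $f(x)=f_1(x)/(f_1(x)+f_2(x))$ is well defined. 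One checks at once that $f^{-1}(0)=f_1^{-1}(0)=A$ and $f^{-1}(1)=f_2^{-1}(0)=B$, so $A\subseteq f^{-1}(0)$ and $B\subseteq f^{-1}(1)$; it thus remains only to prove that $f\in K_\alpha(X)$ (and one actually gets slightly more than $\alpha$-separation, namely a function realising $A$ and $B$ precisely as its $0$- and $1$-level sets).

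To verify $f\in K_\alpha(X)$ I would factor $f$ through the plane. Set $g=(f_1,f_2)\colon X\to\mathbb R^2$; then $g$ maps $X$ into $Q=\{(s,t):s\ge 0,\ t\ge 0,\ s+t>0\}$, on which $\psi(s,t)=s/(s+t)$ is continuous, and $f=\psi\circ g$. For open $V\subseteq\mathbb R$ we have $f^{-1}(V)=g^{-1}(\psi^{-1}(V))$ with $\psi^{-1}(V)$ relatively open in $Q$, so it suffices to show that $g^{-1}(W)$ is of the $\alpha$'th functionally additive class in $X$ for every open $W\subseteq\mathbb R^2$. For a basic open box $U\times U'$ one has $g^{-1}(U\times U')=f_1^{-1}(U)\cap f_2^{-1}(U')$, an intersection of two sets of the $\alpha$'th functionally additive class in $X$; and an arbitrary open $W\subseteq\mathbb R^2$ is a countable union of such boxes, whence $g^{-1}(W)$ is a countable union of finite intersections of sets of the $\alpha$'th functionally additive class. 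So the whole matter reduces to the stability of the $\alpha$'th functionally additive class under finite intersections and countable unions.

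That stability is the only delicate point, and it is routine. For $\alpha=0$ it is immediate (a finite intersection of functionally open sets is functionally open, and $\bigcup_n\varphi_n^{-1}((0,1])=\varphi^{-1}((0,1])$ for $\varphi=\sum_n 2^{-n}\varphi_n$), and in that case the argument above is precisely the classical proof that disjoint functionally closed sets are completely separated. For $\alpha\ge 1$ these are the standard stability properties of the functional Borel hierarchy, obtained by transfinite induction on $\alpha$. A variant avoiding the plane altogether is available for $\alpha\ge 1$: since $X\setminus A$ and $X\setminus B$ are of the $\alpha$'th functionally additive class and cover $X$, Lemma~\ref{8Lemma22} applied to the cover $X\setminus A$, $X\setminus B$, $\O$, $\O,\dots$ yields disjoint functionally ambiguous sets of the class $\alpha$, $B_1\subseteq X\setminus A$ and $B_2\subseteq X\setminus B$, with $B_1\cup B_2=X$; then $A\subseteq B_2$, $B\subseteq B_1$, and the characteristic function $f=\chi_{B_1}$ belongs to $K_\alpha(X)$ since its preimages of open subsets of $\mathbb R$ lie among $\O$, $B_1$, $B_2$, $X$, and it satisfies $A\subseteq B_2=f^{-1}(0)$ and $B\subseteq B_1=f^{-1}(1)$.
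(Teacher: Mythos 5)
Your proposal is correct and follows essentially the same route as the paper: apply Lemma~\ref{lemma:preimage} to get $f_1,f_2$ with $A=f_1^{-1}(0)$, $B=f_2^{-1}(0)$, and separate via $f=f_1/(f_1+f_2)$; the paper simply asserts ``it is easy to see that $f\in K_\alpha(X)$'' where you supply the verification through the factorization over $\mathbb R^2$ and the closure of the $\alpha$'th functionally additive class under finite intersections and countable unions. Your alternative via Lemma~\ref{8Lemma22} and a characteristic function is also sound for $\alpha\ge 1$ (it is essentially the content of Lemma~\ref{Lemma23}), but the main argument is the paper's own.
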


\begin{proof}
  By Lemma~\ref{lemma:preimage} we choose functions $f_1,f_2\in K_\alpha(X)$ such that \mbox{$A=f_1^{-1}(0)$} and $B=f_2^{-1}(0)$. For all $x\in X$
  let
  $$
  f(x)=\frac{f_1(x)}{f_1(x)+f_2(x)}.
  $$
  It is easy to see that $f\in K_\alpha(X)$, $f(x)=0$ on $A$ and $f(x)=1$ on $B$.
\end{proof}

Let $0\le\alpha<\omega_1$. A topological space $X$ is {\it $\alpha$-separated} if any two disjoint sets $A,B\subseteq X$ of the $\alpha$'th multiplicative class in  $X$ are $\alpha$-separated. It follows from Urysohn's Lemma \cite[p.~41]{Eng} that a topological space is $0$-separated if and only if it is normal. Proposition~\ref{lemma:preimage2} implies that every perfectly normal space is  $\alpha$-separated for each $\alpha\ge 0$. It is naturally to ask whether there is an $\alpha$-separated space for $\alpha\ge 1$ which is not perfectly normal.

\begin{example}
  There exists a completely regular $1$-separated space which is not perfectly normal.
\end{example}

\begin{proof}
  Let $D=D(\mathfrak m)$ be a discrete space of the cardinality $\mathfrak m$, where $\mathfrak m$ is a measurable cardinal number \cite[12.1]{GillJer}. According to \cite[12.2]{GillJer}, $D$ is not a realcompact space. Let $X=\upsilon D$ be a Hewitt realcompactification of $D$ \cite[p.~218]{Eng}. Then $X$ is an extremally disconnected $P$-space, which is not discrete~\cite[12H]{GillJer}. Thus, there exists a point $x\in X$ such that the set $\{x\}$ is not open.  Then $\{x\}$, being a closed set, is not a $G_\delta$-set, since  $X$ is a $P$-space (i.e. a space in which every $G_\delta$-subset is open). Therefore, the space $X$ is not perfect.

  If $A$ and $B$ are disjoint $G_\delta$-subsets of $X$, then $A$ and $B$ are open in $X$. Notice that in an extremally disconnected space any two disjoint open sets are completely separated \cite[1H]{GillJer}. Consequently, $A$ and $B$ are $1$-separated, since every continuous function belongs to the first Lebesgue class.
\end{proof}

Clearly, every ambiguous set $A$ of the class $0$ in a topological space (i.e., every clopen set) is a functionally ambiguous set of the class $0$. If $A$ is an ambiguous set of the first class, i.e. $A$ is an $F_\sigma$- and a $G_\delta$-set, then $A$ need not be a functionally $F_\sigma$- or a functionally $G_\delta$-set. Indeed, let $X$ be the Niemytski plane,  $E$ be a set which is not of the $G_{\delta\sigma}$-type in $\mathbb R$ and let $A=E\times \{0\}$ be a subspace of $X$. Then $A$ is closed and consequently $G_\delta$-subset of $X$, since the Niemytski plane is a perfect space.  Assume that $A$ is a functionally $F_\sigma$-set in $X$. Then $A=\bigcup\limits_{n=1}^\infty A_n$, where $A_n$ is a functionally closed subset of $X$ for every $n\in\mathbb N$. According to \cite[Theorem 5.1]{Ohta}, a closed subset $F$ of $X$ is a functionally closed set in $X$ if and only if the set $\{x\in\mathbb R: (x,0)\in F\}$ is a $G_\delta$-set in $\mathbb R$. It follows that for every $n\in\mathbb N$ the set $A_n$ is a $G_\delta$-subset of $\mathbb R$, which implies a contradiction.

\begin{theorem}\label{properties}
  \renewcommand{\theenumi}{\arabic{enumi}}
Let $0\le\alpha<\omega_1$ and let $X$ be an $\alpha$-separated space.
\begin{enumerate}
  \item    Every ambiguous set $A\subseteq X$ of the class $\alpha$ is functionally ambiguous of the class $\alpha$.

  \item\label{insertion} For any disjoint sets $A$ and $B$ of the $(\alpha+1)$'th additive class in $X$ there exists a set $C$ of the $(\alpha+1)$'th functionally multiplicative class such that
$$A\subseteq C\subseteq X\setminus B.$$

\item\label{amb} Every  ambiguous set $A$ of the $(\alpha+1)$'th class in  $X$ is a functionally ambiguous set of the $(\alpha+1)$'th class.

\item Any set of the $\alpha$'th multiplicative class in $X$ is  $\alpha$-embedded.
\end{enumerate}
\end{theorem}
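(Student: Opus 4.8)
The plan is to prove the four assertions more or less in the order stated, using the structural lemmas already available. For \emph{(1)}, let $A$ be ambiguous of class $\alpha$; then both $A$ and $X\setminus A$ belong to the $\alpha$'th multiplicative class (when $\alpha$ is even this is immediate, when $\alpha$ is odd one uses that the ambiguous class is closed under complementation). Since $X$ is $\alpha$-separated and $A$, $X\setminus A$ are disjoint sets of the $\alpha$'th multiplicative class, there is $f\in K_\alpha(X)$ with $A\subseteq f^{-1}(0)$ and $X\setminus A\subseteq f^{-1}(1)$; then $A=f^{-1}(0)$ is functionally multiplicative of class $\alpha$ and $A=f^{-1}((0,1])^c$ gives the additive side as well, so $A$ is functionally ambiguous of class $\alpha$.

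For \emph{(2)}, suppose $A=\bigcup_n A_n$ and $B=\bigcup_n B_n$ with $A_n,B_n$ of the $\alpha$'th multiplicative class. The idea is the classical ``reduction/separation'' construction: replace the $A_n$'s by disjointified pieces using Lemma~\ref{8Lemma21}-type bookkeeping so as to produce, for each $n$, a functionally multiplicative (class $\alpha$) set $C_n$ separating $A_n$ from $B$ after the parts of $B$ with smaller index have been removed, and then glue. Concretely, since each $A_n$ is disjoint from each $B_m$ and $X$ is $\alpha$-separated, there is $g_{n,m}\in K_\alpha(X)$ with $A_n\subseteq g_{n,m}^{-1}(0)$, $B_m\subseteq g_{n,m}^{-1}(1)$; intersecting countably many of the functionally multiplicative class-$\alpha$ sets $g_{n,m}^{-1}(0)$ over $m$ produces a functionally multiplicative class-$\alpha$ (hence certainly class $\alpha+1$ multiplicative) set $D_n$ with $A_n\subseteq D_n\subseteq X\setminus B$; then $C=\bigcup_n D_n$ contains $A$, is contained in $X\setminus B$, but is only \emph{additive} of class $\alpha+1$. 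To land in the \emph{multiplicative} class one applies the same procedure symmetrically to get an additive class-$(\alpha+1)$ set $C'$ with $B\subseteq C'\subseteq X\setminus A$ and takes $C=X\setminus C'$; alternatively, interlace the two constructions index by index (the standard two-sided reduction), which is the cleaner route and the step I expect to be the main obstacle — getting the disjointification bookkeeping right so that the resulting separating set is genuinely of the $(\alpha+1)$'th functionally \emph{multiplicative} class rather than merely additive.

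For \emph{(3)}, let $A$ be ambiguous of class $\alpha+1$, so $A$ and $X\setminus A$ are both of the $(\alpha+1)$'th additive class and disjoint. Apply \emph{(2)} to the pair $(A, X\setminus A)$ to obtain a functionally multiplicative class-$(\alpha+1)$ set $C$ with $A\subseteq C\subseteq X\setminus(X\setminus A)=A$, i.e.\ $A=C$; so $A$ is functionally multiplicative of class $\alpha+1$. Applying \emph{(2)} to the reversed pair gives $X\setminus A$ functionally multiplicative of class $\alpha+1$, hence $A$ is functionally additive of class $\alpha+1$, so $A$ is functionally ambiguous of class $\alpha+1$.

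For \emph{(4)}, let $E\subseteq X$ be of the $\alpha$'th multiplicative class and let $A$ be of the $\alpha$'th functionally multiplicative class in $E$; we must find $B$ of the $\alpha$'th functionally multiplicative class in $X$ with $A=B\cap E$ (and dually for additive). By Lemma~\ref{lemma:preimage} there is $f\in K_\alpha^*(E)$ with $A=f^{-1}(0)$. The point is that, working in $E$, the sets $f^{-1}(0)$ and $E\setminus f^{-1}([0,1/n))$ are disjoint functionally multiplicative class-$\alpha$ sets; pulling a basic open set back I get functionally ambiguous class-$\alpha$ subsets of $E$ whose traces describe $f$. Using that $E$ itself is of the $\alpha$'th multiplicative class in $X$, I extend the relevant functionally ambiguous pieces of $E$ to functionally ambiguous class-$\alpha$ sets in $X$ by intersecting with $E$'s defining set and invoking Proposition~\ref{lemma:preimage2}/Proposition~\ref{alpha_in_alpha}, then reassemble a function $g\in K_\alpha(X)$ with $g|_E=f$; taking $B=g^{-1}(0)$ finishes the multiplicative case, and the additive case follows by passing to complements. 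I expect \emph{(4)} to be routine once \emph{(1)}--\emph{(3)} are in hand, since it is essentially Proposition~\ref{alpha_in_alpha} combined with the $\alpha$-separatedness of $X$; the genuinely delicate point of the whole theorem is the two-sided reduction in \emph{(2)}.
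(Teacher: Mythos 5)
Parts (1)--(3) of your proposal are correct and follow essentially the paper's route. (1) is the paper's argument. For (2), your one-sided construction followed by complementation yields exactly the set $\bigcap_{n}\bigcup_{m} h_{n,m}^{-1}(\mathbb R\setminus\{0\})$ with the outer intersection running over the index that enumerates the pieces of $B$; this is the correct arrangement (the paper's displayed formula has the two quantifiers over $n$ and $m$ transposed, so your extra care with the bookkeeping is warranted), and the interleaved ``two-sided reduction'' you flag as the main obstacle is not actually needed. (3) is the paper's argument verbatim.

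The genuine gap is in (4). You propose to extend the functionally ambiguous pieces of $E$ to functionally ambiguous class-$\alpha$ sets of $X$ by ``invoking Proposition~\ref{lemma:preimage2}/Proposition~\ref{alpha_in_alpha}'' and then to reassemble an extension $g\in K_\alpha(X)$ of $f$. Proposition~\ref{alpha_in_alpha} presupposes that $E$ is $\alpha$-embedded, which is precisely the conclusion of (4), so that appeal is circular; and Proposition~\ref{lemma:preimage2} applies only to sets that are \emph{functionally} multiplicative of class $\alpha$ in $X$, which is exactly what you do not yet know about $A$ and the pieces of $E\setminus A$. The step that actually carries (4), and which your sketch never states, is this: $A$ and each piece $B_n$ of $E\setminus A$ (your $f^{-1}([1/n,1])$ work fine) are of the $\alpha$'th \emph{multiplicative} --- not functionally multiplicative --- class in the subspace $E$, hence, because $E$ is of the $\alpha$'th multiplicative class in $X$, they are of the $\alpha$'th multiplicative class in $X$; one then applies the \emph{hypothesis} that $X$ is $\alpha$-separated to each disjoint pair $(A,B_n)$ to obtain $f_n\in K_\alpha(X)$ with $A\subseteq f_n^{-1}(0)$ and $B_n\subseteq f_n^{-1}(1)$, and sets $\tilde A=\bigcap_{n=1}^\infty f_n^{-1}(0)$, which is of the $\alpha$'th functionally multiplicative class in $X$ and satisfies $\tilde A\cap E=A$. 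No extension of $f$ itself to a $K_\alpha$-function on $X$ is needed (nor is one available at this stage --- that is the content of the later extension theorems, which depend on this one). The additive case by complementation and the case $\alpha=0$ via normality and Proposition~\ref{prop:examples} are then as you indicate.
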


\begin{proof}
 (1) Since the set $B=X\setminus A$ belongs to the $\alpha$'th multiplicative class in $X$, there exists a function $f\in K_\alpha(X)$ such that $A\subseteq f^{-1}(0)$ and $B\subseteq f^{-1}(1)$. Then $A=f^{-1}(0)$ and $B=f^{-1}(1)$. Hence, the sets $A$ and $B$ are of the $\alpha$'th functionally multiplicative class. Consequently, $A$ is a functionally ambiguous set of the class~$\alpha$.

 (2)  Choose two sequences $(A_n)_{n=1}^\infty$ and $(B_n)_{n=1}^\infty$, where $A_n$ and $B_n$ belong to the $\alpha$'th multiplicative class in $X$ for every $n\in\mathbb N$, such that $A=\bigcup\limits_{n=1}^\infty A_n$ and $B=\bigcup\limits_{n=1}^\infty B_n$. Since $X$ is $\alpha$-separated, for every $n,m\in \mathbb N$ there exists a function $f_{n,m}\in K_\alpha(X)$ such that $A_n\subseteq f_{n,m}^{-1}(1)$ and $B_m\subseteq f_{n,m}^{-1}(0)$.
 Set $$C=\bigcap\limits_{n=1}^\infty\bigcup\limits_{m=1}^\infty f_{n,m}^{-1}((0,1]).$$
  Then the set $C$ is of the $(\alpha+1)$'th functionally multiplicative class in $X$ and $A\subseteq C\subseteq X\setminus B$.

 (3)   Let $A\subseteq X$ be an  ambiguous set of the $(\alpha+1)$'th class. Denote $B=X\setminus A$. Since $A$ and $B$ are disjoint sets of the $(\alpha+1)$'th additive class in $X$, according to~(\ref{insertion}) there exists a set $C\subseteq X$ of the $(\alpha+1)$'th functionally multiplicative class such that $A\subseteq C\subseteq X\setminus B$. It follows that $A=C$, consequently $A$ is   of the $(\alpha+1)$'th functionally multiplicative class. Analogously, it can be shown that $B$ is also of the $(\alpha+1)$'th functionally multiplicative class. Therefore, $A$ is a functionally ambiguous set of the $(\alpha+1)$'th class.

 (4) If $\alpha=0$ then $X$ is a normal space. Therefore, any closed set $F$ in $X$ is $0$-embedded by Proposition~\ref{prop:examples}.

Let $\alpha>0$ and let $E\subseteq X$ be a set of the  $\alpha$'th multiplicative class in $X$. Choose any set $A$ of the $\alpha$'th functionally multiplicative class in $E$. Since the set $E\setminus A$ belongs to the $\alpha$'th functionally additive class in $E$, there exists a sequence of sets $B_n$ of the $\alpha$'th functionally multiplicative class in $E$ such that $E\setminus A=\bigcup\limits_{n=1}^\infty B_n$.
Then for every $n\in\mathbb N$ the sets $A$ and $B_n$ are disjoint and belong to the $\alpha$'th multiplicative class in $X$. Since $X$ is $\alpha$-separated, we can choose a function $f_n\in K_\alpha(X)$ such that $A\subseteq f_n^{-1}(0)$ and $B_n\subseteq f_n^{-1}(1)$. Let $\tilde
  A=\bigcap\limits_{n=1}^\infty f_n^{-1}(0)$. Then the set $\tilde A$ belongs to the $\alpha$'th functionally multiplicative class in $X$ and $\tilde A\cap E=A$.
\end{proof}

\begin{proposition}\label{norm}
A topological space  $X$ is normal if and only if every its closed subset is $0$-embedded.
\end{proposition}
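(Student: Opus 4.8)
The plan is to prove both implications separately, using results already established in the excerpt.

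For the forward direction, suppose $X$ is normal. By Proposition~\ref{prop:examples}(iv) with $E=F$ (note every closed set is trivially $F_\sigma$ in $X$), every closed subset of a normal space is $0$-embedded. Alternatively, one can argue directly: given a functionally open set $G$ in the closed subspace $F$, write $G=U\cap F$ for $U$ open in $X$, use that $G$ is a functionally $F_\sigma$ in $F$, hence an $F_\sigma$ in $X$ contained in $U$, and apply normality (the Urysohn-type argument) to each piece to produce a functionally open set $V$ in $X$ with $V\cap F=G$. So the forward implication is essentially a citation of Proposition~\ref{prop:examples}.

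For the converse, assume every closed subset of $X$ is $0$-embedded and show $X$ is normal. Let $F_1$ and $F_2$ be disjoint closed sets in $X$. The idea is to build a single closed set whose $0$-embeddedness forces a separating function. Put $F=F_1\cup F_2$, which is closed, hence $0$-embedded. Now $F_1$ and $F_2$ are disjoint relatively closed subsets of $F$, and since $F=F_1\sqcup F_2$ is a partition into two clopen pieces of $F$, the set $F_1$ is functionally clopen in $F$: the function $\varphi:F\to[0,1]$ equal to $0$ on $F_1$ and $1$ on $F_2$ is continuous on $F$, so $F_1=\varphi^{-1}(0)$ is functionally closed and $F_2 = \varphi^{-1}((0,1])$ shows $F_1$ is also functionally open in $F$. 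Since $F$ is $0$-embedded, there is a functionally open set $V$ in $X$ with $V\cap F=F_1$; equivalently, since $F_1$ is also functionally closed in $F$, there is a functionally closed set $Z$ in $X$ with $Z\cap F = F_1$. Apply the $0$-embedding to both $F_1$ (as functionally closed in $F$) and $F_2$ (as functionally closed in $F$): we get functionally closed sets $Z_1,Z_2$ in $X$ with $Z_i\cap F=F_i$. Then $Z_1\supseteq F_1$, $Z_2\supseteq F_2$, and replacing $Z_1$ by $Z_1\setminus$ (a functionally open neighborhood coming from $Z_2$)—more cleanly: $F_1$ and the complement of the functionally open set $V$ are disjoint functionally closed sets in $X$ with $F_2\subseteq X\setminus V$, so by Proposition~\ref{lemma:preimage2} (with $\alpha=0$) they are completely separated, which gives a continuous $f:X\to[0,1]$ with $f=0$ on $F_1$ and $f=1$ on $X\setminus V\supseteq F_2$. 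Hence $F_1$ and $F_2$ are completely separated, so $X$ is normal by Urysohn's Lemma.

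The main point to get right is the converse, specifically extracting a genuine separation from the single abstract hypothesis "$F_1\cup F_2$ is $0$-embedded." The key observation that unlocks it is that in the subspace $F=F_1\cup F_2$ the two disjoint closed pieces are automatically \emph{functionally} clopen (because the two-valued function distinguishing them is continuous on $F$), so $0$-embeddedness transfers this to a functionally open/closed separator in $X$; the remaining step is the routine upgrade from "disjoint functionally closed sets" to "completely separated," which is exactly Proposition~\ref{lemma:preimage2} at level $\alpha=0$ (or directly the definition of functionally closed plus the standard $f_1/(f_1+f_2)$ trick). No other obstacle arises; everything else is bookkeeping.
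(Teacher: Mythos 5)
The forward direction is fine: it is exactly the citation of Proposition~\ref{prop:examples}(iv), which is also how the paper disposes of it (the paper's proof opens with ``we only need to prove the sufficiency'').

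The converse, however, has a genuine gap at the decisive step. After one application of $0$-embeddedness to $F=F_1\cup F_2$ you obtain a functionally open $V$ with $V\cap F=F_1$ and functionally closed $Z_1,Z_2$ with $Z_i\cap F=F_i$, and you then assert that ``$F_1$ and $X\setminus V$ are disjoint functionally closed sets in $X$.'' But $F_1$ is only closed in $X$; nothing makes it functionally closed in $X$ (if closed sets of $X$ were automatically functionally closed there would be little left to prove), so Proposition~\ref{lemma:preimage2} does not apply to the pair $(F_1,\,X\setminus V)$. The natural repairs inside your setup also fail: $Z_1$ and $X\setminus V$ need not be disjoint, since $Z_1\cap(X\setminus V)\cap F=\emptyset$ says nothing about points of $Z_1$ outside $F$; likewise $Z_1$ and $Z_2$ may meet off $F$, and intersecting with complements of the functionally open extensions does not restore disjointness. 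A single application of the hypothesis to $F$ only yields a $z$-embedding--type conclusion, which is strictly weaker than the separation you need.

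The missing idea is the paper's second, \emph{iterated} application of the hypothesis to a new closed set built from the first extension. Having found a functionally closed $\tilde A$ in $X$ with $\tilde A\cap F=F_1$ (your $Z_1$), one forms $D=\tilde A\cup F_2$, which is closed in $X$ and in which $F_2$ is functionally clopen (same two-valued-function observation you already made for $F$). Applying $0$-embeddedness to $D$ gives a functionally closed $\tilde B$ with $\tilde B\cap D=F_2$, and now disjointness is automatic: $\tilde A\cap\tilde B=\tilde A\cap D\cap\tilde B=\tilde A\cap F_2=\emptyset$. From there your final step (Proposition~\ref{lemma:preimage2} at $\alpha=0$, then the preimages of $[0,1/2)$ and $(1/2,1]$) goes through verbatim. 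So the overall strategy is right, but without the second application to $\tilde A\cup F_2$ the argument does not close.
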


\begin{proof} We only need to prove the sufficiency. Let $A$ and $B$ be disjoint closed subsets of $X$. Then $A$ is a functionally closed subset of $E=A\cup B$. Since $E$ is closed in $X$, $E$ is a $0$-embedded set. Therefore, there is a functionally closed set $\tilde A$ in $X$ such that $A=E\cap \tilde A$. Then $B$ is a functionally closed subset of the closed set $D=\tilde A\cup B$. Since $D$ is $0$-embedded in $X$, there exists a functionally closed set $\tilde B$ in $X$ such that $B=D\cap\tilde B$. It is easy to check that $\tilde A\cap \tilde B=\O$.  If $f:X\to [0,1]$ be a continuous function such that $\tilde A=f^{-1}(0)$ and $\tilde B=f^{-1}(1)$, then the sets $U=f^{-1}([0,1/2))$ and $V=f^{-1}((1/2,1])$ are disjoint and open in $X$, $A\subseteq U$ and $B\subseteq V$. Hence, $X$ is a normal space.
\end{proof}

An analog of the previous proposition takes place for hereditarily $\alpha$-separated spaces. We say that a~topological space $X$ is {\it hereditarily $\alpha$-separated} if every its subspace is $\alpha$-separated.

\begin{proposition} Let $0\le\alpha<\omega_1$ and let $X$ be a a hereditarily $\alpha$-separated space. If every subset of the $(\alpha+1)$'th multiplicative class in $X$ is $(\alpha+1)$-embedded, then $X$ is $(\alpha+1)$-separated.
\end{proposition}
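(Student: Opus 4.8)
The plan is to mimic the proof of Proposition~\ref{norm}, with ``functionally closed'' replaced everywhere by ``of the $(\alpha+1)$'th functionally multiplicative class'' and with Theorem~\ref{properties}(3) used to pass from ambiguous to functionally ambiguous sets on suitable subspaces. Fix disjoint sets $A$ and $B$ of the $(\alpha+1)$'th multiplicative class in $X$; I must produce a function $f\in K_{\alpha+1}(X)$ with $A\subseteq f^{-1}(0)$ and $B\subseteq f^{-1}(1)$.

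First I would put $E=A\cup B$. Since multiplicative classes of positive index are stable under finite unions, $E$ is of the $(\alpha+1)$'th multiplicative class in $X$. In the subspace $E$ the set $A$ is of the $(\alpha+1)$'th multiplicative class (a restriction of such a set from $X$), and so is $B=E\setminus A$, whence $A$ is simultaneously of the $(\alpha+1)$'th additive class in $E$; thus $A$ is ambiguous of the $(\alpha+1)$'th class in $E$. As $X$ is hereditarily $\alpha$-separated, $E$ is $\alpha$-separated, so Theorem~\ref{properties}(3) shows that $A$ is functionally ambiguous of the $(\alpha+1)$'th class in $E$, in particular of the $(\alpha+1)$'th functionally multiplicative class in $E$. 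Since $E$ is of the $(\alpha+1)$'th multiplicative class in $X$, the hypothesis makes $E$ an $(\alpha+1)$-embedded set, so there is a set $\tilde A$ of the $(\alpha+1)$'th functionally multiplicative class in $X$ with $A=\tilde A\cap E$; note $\tilde A\cap B=\tilde A\cap E\cap B=A\cap B=\O$.

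Next I would repeat the construction with $D=\tilde A\cup B$, which again is of the $(\alpha+1)$'th multiplicative class in $X$. In $D$ the set $\tilde A=\tilde A\cap D$ is of the $(\alpha+1)$'th functionally multiplicative class, so $B=D\setminus\tilde A$ is of the $(\alpha+1)$'th additive class in $D$; it is also of the $(\alpha+1)$'th multiplicative class in $D$ as a restriction from $X$, hence ambiguous of the $(\alpha+1)$'th class in $D$. Applying Theorem~\ref{properties}(3) to the $\alpha$-separated subspace $D$ and then the hypothesis to the $(\alpha+1)$-embedded set $D$, I obtain a set $\tilde B$ of the $(\alpha+1)$'th functionally multiplicative class in $X$ with $B=\tilde B\cap D$; since $\tilde A\subseteq D$, it follows that $\tilde A\cap\tilde B=\tilde A\cap(\tilde B\cap D)=\tilde A\cap B=\O$. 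Now $\tilde A$ and $\tilde B$ are disjoint sets of the $(\alpha+1)$'th functionally multiplicative class in $X$ with $A\subseteq\tilde A$ and $B\subseteq\tilde B$, so Proposition~\ref{lemma:preimage2}, applied to the ordinal $\alpha+1$, yields $f\in K_{\alpha+1}(X)$ with $\tilde A\subseteq f^{-1}(0)$ and $\tilde B\subseteq f^{-1}(1)$; then $A\subseteq f^{-1}(0)$ and $B\subseteq f^{-1}(1)$, and since $A$ and $B$ were arbitrary, $X$ is $(\alpha+1)$-separated.

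The only points needing care are bookkeeping ones: that $E$ and $D$ really lie in the $(\alpha+1)$'th multiplicative class of $X$, so that the $(\alpha+1)$-embedding hypothesis is applicable — this rests on the elementary stability of multiplicative classes of index at least $1$ under finite unions — and that the passage ``ambiguous $\Rightarrow$ functionally ambiguous'' via Theorem~\ref{properties}(3) is legitimate on the subspaces $E$ and $D$, which is precisely where hereditary $\alpha$-separatedness, rather than mere $\alpha$-separatedness of $X$, enters.
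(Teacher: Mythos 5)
Your proposal is correct and follows essentially the same route as the paper's own proof: form $E=A\cup B$, use hereditary $\alpha$-separatedness with Theorem~4.1(3) to upgrade ambiguous to functionally ambiguous, invoke the $(\alpha+1)$-embedding hypothesis to get $\tilde A$, repeat with $D=\tilde A\cup B$ to get $\tilde B$, and finish with Proposition~3.5 at level $\alpha+1$. The only difference is that you spell out the bookkeeping (closure of multiplicative classes under finite unions, and the ambiguity of $B$ in $D$) that the paper leaves implicit.
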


\begin{proof}
  Let $A, B\subseteq X$ be disjoint sets of the $(\alpha+1)$'th multiplicative class. Then $A$ is ambiguous of the class $(\alpha+1)$ in $E=A\cup B$. Since $E$ belongs to the $(\alpha+1)$'th multiplicative class in $X$, $E$ is $(\alpha+1)$-embedded. Moreover, $E$ is $\alpha$-separated as a subspace of the hereditarily $\alpha$-separated space $X$. According to Theorem~\ref{properties}(3) $A$ is functionally ambiguous of the $(\alpha+1)$'th class in $E$.
  Therefore, there is a  set $\tilde A$ of the $(\alpha+1)$'th functionally multiplicative class in $X$ such that $A=E\cap \tilde A$. Then $B$ is a functionally ambiguous subset of the class $(\alpha+1)$ in $D=\tilde A\cup B$. Since $D$ belongs to the $(\alpha+1)$'th multiplicative class in $X$, $D$ is $(\alpha+1)$-embedded. Therefore, there exists a set $\tilde B$ of the $(\alpha+1)$'th functionally multiplicative class in $X$ such that $B=D\cap\tilde B$. It is easy to check that $\tilde A\cap \tilde B=\O$.   Hence,  the sets $\tilde A$ and $\tilde B$ are $(\alpha+1)$-separated by Proposition~\ref{lemma:preimage2}. Then $A$ and $B$ are $(\alpha+1)$-separated too.
\end{proof}

Remark that the Alexandroff compactification of the real line $\mathbb R$ endowed with the discrete topology is a hereditarily normal space which is not $1$-separated.

We give some examples below of $\alpha$-separated subsets of a completely regular space.

\begin{proposition}\label{prop:lind}
  Let $X$ be a completely regular space and $A,B\subseteq X$ are disjoint sets. Then
  \renewcommand{\theenumi}{\alph{enumi}}
  \begin{enumerate}

    \item if $A$ and $B$ are Lindel\"{o}f $G_\delta$-sets, then they are $1$-separated;

    \item if $A$ is a Lindel\"{o}f hereditarily Baire space and $B$ is a functionally $G_\delta$-set, then $A$ and $B$ are $1$-separated;

    \item if $A$ is Lindel\"{o}f and $B$ is an $F_\sigma$-set, then $A$ and $B$ are $2$-separated.
  \end{enumerate}
\end{proposition}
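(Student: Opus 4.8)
\emph{Plan.} The three parts are independent, and in each I would reduce, via Proposition~\ref{lemma:preimage2}, to replacing $A$ and $B$ by functionally measurable sets of a controlled class and then separating those. \emph{For (a)}, the plan is to first observe that a Lindel\"of $G_\delta$-subset $A$ of a completely regular space is functionally $G_\delta$: writing $A=\bigcap_n U_n$ with $U_n$ open, expressing each $U_n$ as a union of functionally open sets, and using Lindel\"ofness of $A$ to extract for every $n$ a countable subfamily whose union $W_n$ still satisfies $A\subseteq W_n\subseteq U_n$, one gets $A=\bigcap_n W_n$ --- exactly the argument of Proposition~\ref{prop:examples}(ii). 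The same applies to $B$, so $A$ and $B$ are disjoint sets of the first functionally multiplicative class, and Proposition~\ref{lemma:preimage2} with $\alpha=1$ delivers the required $f\in K_1(X)$.

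\emph{For (c)}, write $B=\bigcup_n F_n$ with $F_n$ closed. Fixing $n$, for each $a\in A$ complete regularity gives a continuous $h_{a,n}\colon X\to[0,1]$ with $h_{a,n}(a)=0$ and $h_{a,n}\equiv 1$ on $F_n$; then $U_{a,n}=h_{a,n}^{-1}([0,\tfrac12))$ is functionally open, contains $a$, and misses $F_n$. Since $\{U_{a,n}:a\in A\}$ covers the Lindel\"of set $A$, a countable subfamily has union $G_n$ with $A\subseteq G_n$ and $G_n\cap F_n=\emptyset$. Put $H_n=X\setminus G_n$ (functionally closed) and $L=\bigcup_n H_n$ (functionally $F_\sigma$); then $F_n\subseteq H_n$ gives $B\subseteq L$, while $\tilde A:=\bigcap_n G_n$ is functionally $G_\delta$, contains $A$, and is disjoint from each $H_n$, hence from $L$. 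Now $\tilde A$ and $L$ are disjoint sets of the first --- hence of the second --- functionally multiplicative class, so Proposition~\ref{lemma:preimage2} with $\alpha=2$ yields an $f\in K_2(X)$ separating $\tilde A$ from $L$, and a fortiori $A$ from $B$.

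\emph{For (b)}, write $B=\bigcap_n V_n$ with $V_n$ functionally open and, after intersecting, decreasing; then $G:=X\setminus B=\bigcup_n C_n$ with $C_n=X\setminus V_n$ functionally closed and increasing, and $A\subseteq G$. The proposition then reduces to a \emph{sandwich lemma}: a Lindel\"of hereditarily Baire set contained in a functionally $F_\sigma$-set $G=\bigcup_n C_n$ is contained in a functionally $G_\delta$-set $\tilde A\subseteq G$; granting it, $\tilde A$ and $B$ are disjoint sets of the first functionally multiplicative class and Proposition~\ref{lemma:preimage2} with $\alpha=1$ finishes as in (a). To prove the lemma I would run a Hurewicz-type transfinite derivation: $A_0=A$, $A_{\xi+1}=A_\xi\setminus\bigcup_n\operatorname{int}_{A_\xi}(A_\xi\cap C_n)$, and $A_\lambda=\bigcap_{\xi<\lambda}A_\xi$ at limits. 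Hereditary Baireness makes each nonempty $A_\xi$ Baire, so $A_{\xi+1}$ is relatively closed, nowhere dense and proper in $A_\xi$, whence the $A_\xi$ strictly decrease to $\emptyset$; Lindel\"ofness of $A$ forces this to happen at a countable ordinal $\gamma$, since otherwise the increasing open cover $\{A\setminus A_\xi:\xi<\gamma\}$ of $A$ would have no countable subcover. Thus $A=\bigcup_{\xi<\gamma}(A_\xi\setminus A_{\xi+1})$, each layer relatively open in $A_\xi$ and locally contained in the sets $C_n$.

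\emph{The main obstacle} is the reassembly of these countably many layers into a single functionally $G_\delta$-set lying inside $G$. For $x\in A$ of derivation rank $\xi$ having an $A_\xi$-neighbourhood inside some $C_n$, complete regularity supplies a functionally open $R_x\ni x$ with $R_x\cap A_\xi\subseteq C_n$ (shrinkable so as also to miss $A_{\xi+1}$); one must then select, using Lindel\"ofness of $A$, countably many such $R_x$ that cover $A$ respecting ranks, and intersect each with the closed set $D_\xi=\bigcap_{\zeta<\xi}\bigl(X\setminus\bigcup\{R:R\text{ functionally open},\ R\cap A_\zeta\subseteq C_n\text{ for some }n\}\bigr)$, so that the resulting countable union is trapped inside $G$; since $\gamma<\omega_1$ the whole bookkeeping stays countable and produces $\tilde A$. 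Making this selection work uniformly across the countably many ranks --- essentially the Hurewicz theorem in this topological generality --- is where the real work lies; everything else is formal and rests on Proposition~\ref{lemma:preimage2}.
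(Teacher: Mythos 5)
Parts (a) and (c) of your proposal are correct and essentially coincide with the paper's argument: in (a) you show both sets are functionally $G_\delta$ exactly as in Proposition~\ref{prop:examples}(ii) and invoke Proposition~\ref{lemma:preimage2} with $\alpha=1$; in (c) you build a functionally $G_\delta$-set $\tilde A$ with $A\subseteq \tilde A\subseteq X\setminus B$ and separate. (Your parenthetical claim there that the functionally $F_\sigma$-set $L$ is of the \emph{first} functionally multiplicative class is wrong --- it is of the first \emph{additive} class --- but you only need that it is of the second multiplicative class, which is true, so this is harmless.)

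The genuine gap is in (b). The paper disposes of (b) in one line by citing \cite[Proposition 12]{Ka}, which is precisely your ``sandwich lemma'': a Lindel\"of hereditarily Baire set disjoint from a functionally $G_\delta$-set $B$ can be enclosed in a functionally $G_\delta$-set contained in $X\setminus B$. You correctly identify that the whole of (b) reduces to this lemma and that the reduction is formal, but you do not prove the lemma: you sketch a Hurewicz-type transfinite derivation and explicitly concede that reassembling the layers into a single functionally $G_\delta$-set inside $G$ ``is where the real work lies.'' That concession is the gap --- the hard content of (b) is exactly that reassembly. Moreover, at least one step of the sketch fails as stated: you infer that the derivation terminates at a \emph{countable} ordinal because otherwise the increasing open cover $\{A\setminus A_\xi:\xi<\gamma\}$ would have no countable subcover; but that cover only covers $A$ at the terminating stage, and a countable subcover of an increasing cover indexed by $\gamma$ yields only that $\gamma$ has countable cofinality, not that $\gamma<\omega_1$ --- Lindel\"of (non--hereditarily Lindel\"of) spaces do admit strictly decreasing transfinite chains of closed sets of uncountable length. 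So as written, (b) is unproved; you should either cite \cite[Proposition 12]{Ka} as the paper does or supply a complete proof of the sandwich lemma, including a correct bound on the length of the derivation.
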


\begin{proof}
{(a)}. Let $A=\bigcap\limits_{n=1}^\infty U_n$, where $U_n$ is an open set in $X$ for every $n\in\mathbb N$. Since $X$ is completely regular, $U_n=\bigcup\limits_{s\in S_n} U_{s,n}$ for every $n\in\mathbb N$, where all the sets $U_{s,n}$ are functionally open in $X$. Then for every  $n\in\mathbb N$ there is a countable set $S_{n,0}\subseteq S_n$ such that $A\subseteq \bigcup\limits_{s\in S_{n_0}}U_{s,n}$, since $A$ is Lindel\"{o}f. Let  $V_n=\bigcup\limits_{s\in S_{n_0}}U_{s,n}$, $n\in\mathbb N$. Obviously, every $V_n$ is a functionally open set and  $A=\bigcap\limits_{n=1}^\infty V_n$. Hence, $A$ is a functionally $G_\delta$-subset of $X$. Analogously, $B$ is also a functionally  $G_\delta$-set. Therefore,  the sets $A$ and $B$ are $1$-separated by Proposition~\ref{lemma:preimage2}.

{(b)}. According to  \cite[Proposition 12]{Ka} there is a functionally $G_\delta$-set $C$ in $X$ such that $A\subseteq C\subseteq X\setminus B$. Taking a function $f\in K_1(X)$  such that $C=f^{-1}(0)$ and $B=f^{-1}(1)$, we obtain that $A$ and $B$ are $1$-separated.

{(c)}. Let $X\setminus B=\bigcap\limits_{n=1}^\infty U_n$, where $(U_n)_{n=1}^\infty$ is a sequence of open subsets of  $X$. Then $U_n=\bigcup\limits_{s\in S_n} U_{s,n}$ for every $n\in\mathbb N$, where all the sets $U_{s,n}$ are functionally open in $X$. Since $A$ is Lindel\"{o}f, $A\subseteq V_n=\bigcup\limits_{s\in S_{n_0}}U_{s,n}$, where the set  $S_{n_0}$ is countable for every $n\in\mathbb N$. Denote $C=\bigcap\limits_{n=1}^\infty V_n$. Then $C$ is a functionally $G_\delta$-set in $X$ and
$A\subseteq C\subseteq X\setminus B$. Since $C$ is a functionally ambiguous set of the second class, $A$ and $B$ are $2$-separated.
\end{proof}

The following example shows that the class of separation of sets  $A$ and $B$ in Proposition~\ref{prop:lind}(c) can not be made lower.

\begin{example}
  There exist a metrizable space $X$ and its disjoint Lindel\"{o}f $F_\sigma$-subsets $A$ and $B$, which are not $1$-separated.
\end{example}

\begin{proof}
  Let $X=\mathbb R$, $A=\mathbb Q$ and $B$ is a countable dense subsets of irrational numbers. Assume that $A$ and $B$ are $1$-separated, i.e. there exist disjoint $G_\delta$-sets $C$ and $D$ in $\mathbb R$ such that $A\subseteq C$ and $B\subseteq D$. Then $\overline{C}=\overline{D}=\mathbb R$, which implies a contradictions, since  $X$ is a Baire space.
\end{proof}

\section{Ambiguously $\alpha$-embedded sets}

Let $0<\alpha<\omega_1$. A subset $E$ of a topological space $X$ is {\it ambiguously $\alpha$-embedded in $X$} if for any functionally ambiguous set $A$ of the class $\alpha$ in $E$ there exists a functionally ambiguous set  $B$ of the class $\alpha$ in $X$ such that  $A=B\cap E$.

\begin{proposition}\label{lemma:amb3}
 Let $0<\alpha<\omega_1$ and let  $X$ be a topological space. Then every ambiguously $\alpha$-embedded set $E$ in $X$ is $\alpha$-embedded in $X$.
\end{proposition}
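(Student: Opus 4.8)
The plan is to decompose an arbitrary set of the $\alpha$'th functionally additive class in $E$ into functionally ambiguous pieces, push each piece into $X$ using the hypothesis, and take the union; the multiplicative case will then follow by complementation.

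First I would fix an ambiguously $\alpha$-embedded set $E\subseteq X$ and a set $A$ of the $\alpha$'th functionally additive class in $E$. Since $\alpha>0$, Lemma~\ref{8Lemma21} applied in the space $E$ yields a representation $A=\bigcup_{n=1}^\infty A_n$ in which every $A_n$ is functionally ambiguous of the class $\alpha$ in $E$. By the definition of ambiguous $\alpha$-embeddedness, for each $n$ there is a functionally ambiguous set $B_n$ of the class $\alpha$ in $X$ with $A_n=B_n\cap E$.

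Next I would set $B=\bigcup_{n=1}^\infty B_n$ and check that $B$ is of the $\alpha$'th functionally additive class in $X$. Each $B_n$, being functionally ambiguous of the class $\alpha$, belongs in particular to the $\alpha$'th functionally additive class of $X$; and the $\alpha$'th functionally additive class is closed under countable unions, since a countable union of sets each of which is itself a countable union of sets of classes $<\alpha$ is again of that form. Hence $B$ lies in the $\alpha$'th functionally additive class in $X$, and $B\cap E=\bigcup_n(B_n\cap E)=\bigcup_n A_n=A$, which is exactly what the additive half of the definition of $\alpha$-embeddedness demands.

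Finally, the multiplicative half follows formally: if $A$ is of the $\alpha$'th functionally multiplicative class in $E$, then $E\setminus A$ is of the $\alpha$'th functionally additive class in $E$, so by the above there is a set $B'$ of the $\alpha$'th functionally additive class in $X$ with $B'\cap E=E\setminus A$; then $B=X\setminus B'$ is of the $\alpha$'th functionally multiplicative class in $X$ and $B\cap E=E\setminus(B'\cap E)=A$. (Equivalently one may dualize the whole argument, writing a set of the $\alpha$'th functionally multiplicative class as a countable intersection of functionally ambiguous sets of class $\alpha$ and using the closedness of the $\alpha$'th functionally multiplicative class under countable intersections.) There is no genuinely hard step here; the only points requiring care are the parity-dependent bookkeeping — that the $\alpha$'th functionally additive class contains every functionally ambiguous set of class $\alpha$ and is stable under countable unions — together with the observation that the additive and multiplicative versions of ``$\alpha$-embedded'' are interchangeable, so that a single decomposition argument suffices.
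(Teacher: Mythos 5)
Your proposal is correct and follows the paper's own argument exactly: decompose $A$ into functionally ambiguous pieces via Lemma~\ref{8Lemma21}, extend each piece using ambiguous $\alpha$-embeddedness, and take the union, which lies in the $\alpha$'th functionally additive class. The explicit complementation step for the multiplicative half, which the paper leaves implicit, is a harmless and correct addition.
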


\begin{proof}
  Take a set $A\subseteq E$ of the $\alpha$'th functionally additive class in $E$. Then $A$ can be written as $A=\bigcup\limits_{n=1}^\infty A_n$, where $A_n$ is a functionally ambiguous set of the class $\alpha$ in $E$ for every $n\in\mathbb N$ by Lemma~\ref{8Lemma21}. Then there exists a sequence of functionally ambiguous sets $B_n$ of the class $\alpha$ in $X$ such that $A_n=B_n\cap E$ for every $n\in\mathbb N$. Let
  $B=\bigcup\limits_{n=1}^\infty B_n$. Then the set $B$ belongs to the $\alpha$'th functionally additive class in $X$ and $B\cap E=A$.
\end{proof}

We will need the following auxiliary fact.

\begin{lemma}[Lemma 2.3 \cite{K4}]\label{Lemma23} Let $0<\alpha<\omega_1$ and let $X$ be a topological space. Then for any disjoint sets $A,B\subseteq X$ of the $\alpha$'th functionally multiplicative class in $X$ there exists a functionally ambiguous set $C$ of the class $\alpha$ in $X$ such that $A\subseteq C\subseteq X\setminus B$.
\end{lemma}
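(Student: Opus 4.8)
The plan is to reduce Lemma~\ref{Lemma23} to Proposition~\ref{lemma:preimage2} via the function it supplies. Since $A$ and $B$ are disjoint sets of the $\alpha$'th functionally multiplicative class, Proposition~\ref{lemma:preimage2} gives a function $f\in K_\alpha(X)$ with $f(x)=0$ on $A$ and $f(x)=1$ on $B$. The natural candidate for the separating ambiguous set is something like $C=f^{-1}([0,\tfrac12))$ together with its complement $X\setminus C=f^{-1}([\tfrac12,1])$; the half-open interval $[0,\tfrac12)$ is functionally open in $[0,1]$ and $[\tfrac12,1]$ is functionally closed, so $C$ is functionally open of the $\alpha$'th additive class and functionally closed of the $\alpha$'th multiplicative class — but this only shows $C$ is functionally ambiguous of class $\alpha+1$ in general, not class $\alpha$. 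So a more careful choice is needed.

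The refinement I would use is to split the open interval $[0,\tfrac12)$ into a countable disjoint union of functionally ambiguous pieces in the line and pull back. Concretely, write $[0,\tfrac12)=\{0\}\cup\bigcup_{n} I_n$ where $I_n$ are pairwise disjoint half-open intervals (each clopen in $(0,1)$, hence functionally ambiguous in $[0,1]$), and similarly decompose $(\tfrac12,1]$. Then $f^{-1}(\{0\})$ is functionally closed hence of functionally multiplicative class $\alpha$; each $f^{-1}(I_n)$ is a preimage of a functionally ambiguous (indeed functionally open-and-closed) subset of $[0,1]$ under a $K_\alpha$-map, hence functionally ambiguous of class $\alpha$ in $X$; and the union of countably many disjoint functionally ambiguous sets of class $\alpha$ whose complement is also such a union is again functionally ambiguous of class $\alpha$ (this is the content implicit in Lemma~\ref{8Lemma22}). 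Taking $C$ to be $f^{-1}(\{0\})\cup\bigcup_n f^{-1}(I_n)=f^{-1}([0,\tfrac12))$ then yields a functionally ambiguous set of class $\alpha$ with $A\subseteq f^{-1}(0)\subseteq C$ and $B\subseteq f^{-1}(1)\subseteq X\setminus C$.

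The step I expect to be the main obstacle is verifying precisely that $C$ and $X\setminus C$ are of functionally multiplicative (equivalently additive) class $\alpha$, rather than $\alpha+1$, when $\alpha$ is a limit ordinal or when bookkeeping the parity of additive/multiplicative classes: one must check that a countable disjoint union of functionally ambiguous sets of class $\alpha$ is still of the $\alpha$'th functionally additive class \emph{and} that its complement — being again such a countable disjoint union — lies in the $\alpha$'th functionally additive class, so that $C$ is genuinely ambiguous of class $\alpha$. This is exactly what the proof of Lemma~\ref{8Lemma22} establishes, so I would invoke it (or repeat its short argument) to close the gap. An alternative, cleaner route that sidesteps the interval decomposition: apply Lemma~\ref{8Lemma21} directly to the $\alpha$'th-additive set $f^{-1}((0,1])\setminus B$ or to $f^{-1}([0,1))\setminus A$ to express the relevant region as a disjoint countable union of class-$\alpha$ ambiguous sets, and assemble $C$ from those pieces; I would present whichever of these makes the class computation most transparent.
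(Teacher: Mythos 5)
Your reduction to Proposition~\ref{lemma:preimage2} contains a step that fails. You claim that for $f\in K_\alpha(X)$ the preimage of a functionally ambiguous subset of $[0,1]$ (a half-open interval $I_n=[a,b)$, say) is functionally ambiguous of class $\alpha$ in $X$. It is not: $f^{-1}([a,b))=f^{-1}([a,1])\cap f^{-1}([0,b))$ is the intersection of a set of the $\alpha$'th functionally multiplicative class with one of the $\alpha$'th functionally additive class, hence in general only ambiguous of class $\alpha+1$. (Already for $\alpha=1$: the Baire-one function on $[0,1]$ equal to $1/2$ at irrationals and to $1/2-1/q$ at $p/q$ has $f^{-1}([1/2,1))$ equal to the set of irrationals, a $G_\delta$ that is not $F_\sigma$.) The parenthetical ``each clopen in $(0,1)$'' is also false for half-open intervals. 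More importantly, even if each $f^{-1}(I_n)$ were ambiguous, your decomposition of $C=f^{-1}([0,\tfrac12))$ necessarily contains the piece $f^{-1}(\{0\})\supseteq A$, and the matching decomposition of $X\setminus C=f^{-1}([\tfrac12,1])$ contains $f^{-1}(\{1\})\supseteq B$; these are only of the $\alpha$'th functionally multiplicative class, so the criterion you want to invoke (both $C$ and $X\setminus C$ are countable unions of ambiguous class-$\alpha$ pieces) is not satisfied. Indeed $X\setminus C$ is $B$ together with an additive class $\alpha$ set, and need not be of the $\alpha$'th functionally additive class when $B$ is not. Your closing alternative (decompose $f^{-1}([0,1))\setminus A$ by Lemma~\ref{8Lemma21} and ``assemble $C$'') does not explain how to absorb all of $A$ while excluding all of $B$, which is precisely the difficulty.

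The paper's proof bypasses the function $f$ altogether: since $A\cap B=\O$, the sets $X\setminus A$ and $X\setminus B$ form a cover of $X$ by two sets of the $\alpha$'th functionally additive class, so Lemma~\ref{8Lemma22} (a reduction theorem) yields disjoint functionally ambiguous sets $E_1\subseteq X\setminus A$ and $E_2\subseteq X\setminus B$ of class $\alpha$ with $X=E_1\cup E_2$, and $C=E_2$ works. The repair your argument needs is exactly this interleaving of the two decompositions of $X\setminus A$ and $X\setminus B$ that Lemma~\ref{8Lemma22} performs; decomposing the interval $[0,1]$ instead cannot work, because the level sets $f^{-1}(0)$ and $f^{-1}(1)$ are irreducible obstructions on the two sides of any cut.
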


\begin{proof}
  Lemma~\ref{8Lemma22} implies that there are disjoint functionally ambiguous sets $E_1$ and $E_2$ of the class $\alpha$ such that $E_1\subseteq X\setminus A$, $E_2\subseteq X\setminus B$ and $X=E_1\cup E_2$. It remains to put $C=E_2$.
\end{proof}

\begin{proposition}\label{lemma:amb1}
  Let $0<\alpha<\omega_1$ and let  $X$ be a topological space. Then every $\alpha$-embedded set $E$ of the $\alpha$'th functionally multiplicative class in $X$ is ambiguously $\alpha$-embedded in  $X$.
\end{proposition}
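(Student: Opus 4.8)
The plan is to reduce the statement to Lemma~\ref{Lemma23} by first transferring $A$ and its complement in $E$ into the $\alpha$'th functionally multiplicative class of the whole space $X$, and then exploiting the hypothesis that $E$ itself lies in that class.

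First I would take an arbitrary functionally ambiguous set $A$ of the class $\alpha$ in $E$; by definition both $A$ and $E\setminus A$ belong to the $\alpha$'th functionally multiplicative class in $E$. Since $E$ is $\alpha$-embedded in $X$ and belongs to the $\alpha$'th functionally multiplicative class in $X$, Proposition~\ref{alpha_in_alpha} (applied in its multiplicative form) yields that $A$ and $E\setminus A$ both belong to the $\alpha$'th functionally multiplicative class in $X$ as well. (Equivalently, one may pick witnesses $A_1,A_2$ of that class in $X$ with $A=A_1\cap E$ and $E\setminus A=A_2\cap E$, using $\alpha$-embeddedness, and then intersect with $E$, which is itself of multiplicative class $\alpha$; this is exactly where the hypothesis on $E$ is used.)

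Next, $A$ and $E\setminus A$ are disjoint sets of the $\alpha$'th functionally multiplicative class in $X$, so Lemma~\ref{Lemma23} provides a functionally ambiguous set $B$ of the class $\alpha$ in $X$ with
\[
A\subseteq B\subseteq X\setminus(E\setminus A).
\]
It then remains to check that $B\cap E=A$: the inclusion $A\subseteq B\cap E$ is immediate from $A\subseteq B$ and $A\subseteq E$, while the reverse inclusion follows from $B\cap E\subseteq E\cap\bigl(X\setminus(E\setminus A)\bigr)=A$. Hence $B$ is a functionally ambiguous set of the class $\alpha$ in $X$ with $B\cap E=A$, so $E$ is ambiguously $\alpha$-embedded in $X$.

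I do not expect a serious obstacle; the only points requiring care are that the hypothesis ``$E$ is of the $\alpha$'th functionally multiplicative class in $X$'' is genuinely needed — it is precisely what allows Proposition~\ref{alpha_in_alpha} to promote $A$ and $E\setminus A$ from classes in $E$ to classes in $X$ — and that Lemma~\ref{Lemma23} is available because $0<\alpha<\omega_1$. The pleasant feature of the argument is that sandwiching $B$ strictly between $A$ and $X\setminus(E\setminus A)$ automatically forces the trace $B\cap E$ to be exactly $A$, so no further adjustment of $B$ is necessary.
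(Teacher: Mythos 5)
Your argument is correct and follows essentially the same route as the paper: use the $\alpha$-embedding together with the hypothesis that $E$ is of the $\alpha$'th functionally multiplicative class to promote $A$ and $E\setminus A$ to that class in $X$ (the paper does this directly, which is exactly the content of Proposition~\ref{alpha_in_alpha}), then apply Lemma~\ref{Lemma23} and observe that the sandwiched set traces back to $A$ on $E$. No issues.
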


\begin{proof}
  Consider a functionally ambiguous set $A$ of the class $\alpha$ in $E$. Then there exists a set $B$ of the $\alpha$'th functionally multiplicative class in  $X$ such that $A=B\cap E$. Since $E$ is of the $\alpha$'th functionally multiplicative class in $X$, the set $A$ is also of the same class in  $X$. Analogously, the set $E\setminus A$ belongs to the $\alpha$'th functionally multiplicative class in  $X$. It follows from Lemma~\ref{Lemma23} that there exists a functionally ambiguous set $C$ of the class $\alpha$ in $X$ such that $A\subseteq C$ and $C\cap (E\setminus A)=\O$. Clearly, $C\cap E=A$. Hence, the set $E$ is ambiguously $\alpha$-embedded in $X$.
\end{proof}

\begin{example}
  There exists a $0$-embedded $F_\sigma$-set $E\subseteq \mathbb R$ which is not ambiguously $1$-embedded.
\end{example}

\begin{proof}
  Let $E=\mathbb Q$. Obviously, $E$ is a $0$-embedded set. Consider any two disjoint $A$ and $B$ which are dense in $E$. Then $A$ and $B$ are simultaneously $F_\sigma$- and  $G_\delta$-sets in $E$. Assume that there exists an $F_\sigma$- and $G_\delta$-set $C$ in $\mathbb R$ such that $A=E\cap C$. Since $A\subseteq C$ and $B\subseteq \mathbb R\setminus C$, the sets $C$ and $\mathbb R\setminus C$ are dense in $\mathbb R$. Moreover, the sets  $C$ and $\mathbb R\setminus C$ are $G_\delta$ in $\mathbb R$. It implies a contradiction, since $\mathbb R$ is a Baire space.
\end{proof}

\begin{example}
  There exits a Borel non-measurable ambiguously $1$-embedded subset of a perfectly normal compact space.
\end{example}

\begin{proof}
 Let $X$ be the ''two arrows'' space (see \cite[p.~212]{Eng}), i.e. $X=X_0\cup X_1$, where $X_0=\{(x,0):x\in (0,1]\}$ and $X_1=\{(x,1):x\in [0,1)\}$. The topology base on $X$ is generated by the sets
 $$
 ((x-\frac 1n,x]\times \{0\})\cup((x-\frac 1n,x)\times \{1\})\,\,\, \mbox{if} \,\,\, x\in (0,1]\,\,\, \mbox{and} \,\,\,n\in\mathbb N
 $$ and
 $$
 ((x,x+\frac 1n)\times \{0\})\cup([x,x+\frac 1n)\times \{1\}) \,\,\, \mbox{if} \,\,\, x\in [0,1)\,\,\, \mbox{and} \,\,\,n\in\mathbb N.
 $$
 For a set $A\subseteq X$ we denote
 $$
 A^+=\{x\in [0,1]: (x,1)\in A\}\,\,\,\mbox{and}\,\,\, A^-=\{x\in [0,1]:(x,0)\in A\}.
 $$

 It is not hard to verify that for every open or closed set $A\subseteq X$ we have $|A^+\Delta A^-|\le\aleph_0$. It follows that
 $|B^+\Delta B^-|\le\aleph_0$ for any Borel measurable set $B\subseteq X$.

 Let $E=X_0$. Since $E^+=\O$ and $E^-=(0,1]$, the set $E$ is non-measurable. We show that $E$ is an ambiguously $1$-embedded set. Indeed, let $A\subseteq E$ be an $F_\sigma$- and $G_\delta$-subset of $E$. Then  $B=E\setminus A$ is also an $F_\sigma$- and $G_\delta$-subset of $E$. Let $\tilde A$ and $\tilde B$ be $G_\delta$-sets in $X$ such that $A=\tilde A\cap E$ and $B=\tilde B\cap E$. The inequalities $|\tilde A^+\Delta \tilde A^-|\le \aleph_0$ and $|\tilde B^+\Delta \tilde B^-|\le \aleph_0$ imply that $|C|\le\aleph_0$, where $C=\tilde A\cap \tilde B$. Hence, $C$ is an $F_\sigma$-set in $X$.
 Moreover, $C$ is a  $G_\delta$-set in $X$. Therefore, $\tilde A\setminus C$ and $\tilde B\setminus C$ are $G_\delta$-sets in
 $X$. According to Lemma~\ref{Lemma23}, there is an  $F_\sigma$- and $G_\delta$-set $D$ in $X$ such that $\tilde A\setminus C\subseteq D$ and $D\cap (\tilde  B\setminus C)=\O$. Then $D\cap E=A$.
\end{proof}

\section{Extension of real-valued $K_\alpha$-functions}

Analogs of Proposition~\ref{prop:Extension_Of_Baire_Alpha} and Theorem~\ref{Extension_Not_Bounded} for $\alpha=1$ were proved in \cite{Ka}.

\begin{proposition}\label{prop:Extension_Of_Baire_Alpha}
  Let $X$ be a topological space, $E\subseteq X$ and $0< \alpha<\omega_1$. Then the following conditions are equivalent:
  \begin{enumerate}
    \item $E$ is $K_\alpha^*$-embedded in $X$;

    \item $E$ is ambiguously $\alpha$-embedded in $X$;

    \item $(X,E,[c,d])$ has the $K_\alpha$-extension property for any segment $[c,d]\subseteq \mathbb R$.
  \end{enumerate}
\end{proposition}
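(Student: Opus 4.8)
The plan is to prove the chain of implications $(1)\Rightarrow(3)\Rightarrow(2)\Rightarrow(1)$, exploiting the fact that a bounded $K_\alpha$-function can be analyzed through the preimages of half-lines $f^{-1}([0,t))$ and $f^{-1}((t,1])$, which are of the $\alpha$'th functionally additive class, while $f^{-1}([0,t])$ and $f^{-1}([t,1])$ are of the $\alpha$'th functionally multiplicative class. Here the $K_\alpha$-extension property of $(X,E,Z)$ should mean that every $f\in K_\alpha(E,Z)$ extends to some $g\in K_\alpha(X,Z)$.

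For $(1)\Rightarrow(3)$: a map $f\in K_\alpha(E,[c,d])$ is, after rescaling, just a bounded real-valued $K_\alpha$-function on $E$, so it extends to some $g\in K_\alpha^*(X)$; composing with a retraction $r:\mathbb R\to[c,d]$ (which is continuous, hence $r\circ g\in K_\alpha(X,[c,d])$) gives the desired extension into $[c,d]$. For $(3)\Rightarrow(2)$: given a functionally ambiguous set $A$ of class $\alpha$ in $E$, the characteristic function $\chi_A:E\to[0,1]$ is a $K_\alpha$-function (since $A$ and $E\setminus A$ both lie in the $\alpha$'th functionally multiplicative class of $E$, every preimage of an open set is a finite union of such sets, hence of additive class $\alpha$); extend $\chi_A$ to $g\in K_\alpha(X,[0,1])$ using the extension property for $[0,1]$, and then set $B=g^{-1}((1/2,1])$. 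This $B$ is of the $\alpha$'th functionally additive class in $X$; to get a \emph{functionally ambiguous} witness one should instead apply Theorem~\ref{properties}-style reasoning or, more simply, note $A\subseteq g^{-1}((1/2,1])$ and $E\setminus A\subseteq g^{-1}([0,1/2))$, so $A\subseteq B_1\subseteq X\setminus B_2$ with $B_1=g^{-1}((1/2,1])$, $B_2=g^{-1}([0,1/2))$ disjoint of the $\alpha$'th functionally additive class, and invoke Lemma~\ref{Lemma23} after passing to the multiplicative sets $g^{-1}([1/2,1])$, $g^{-1}([0,1/2])$ — these are disjoint only up to the set $g^{-1}(1/2)$, so a little care (choosing two distinct levels $1/3<2/3$ and using $g^{-1}([2/3,1])$, $g^{-1}([0,1/3])$, which are disjoint of the $\alpha$'th functionally multiplicative class) yields via Lemma~\ref{Lemma23} a functionally ambiguous $C$ of class $\alpha$ with $A\subseteq C\subseteq X\setminus(E\setminus A)$, whence $C\cap E=A$.

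The substantial implication is $(2)\Rightarrow(1)$: given $f\in K_\alpha^*(E)$ with, say, $0\le f\le 1$, one must build an extension $g\in K_\alpha^*(X)$. The standard approach mimics the proof of the Tietze–Urysohn theorem, but working with functionally ambiguous sets in place of open sets. For each dyadic rational $q=k/2^n\in(0,1)$ the set $E_q=\{x\in E: f(x)<q\}$ is of the $\alpha$'th functionally additive class in $E$ and $\{x\in E: f(x)\le q\}$ is of the multiplicative class; by Lemma~\ref{8Lemma21}/\ref{Lemma23} one can sandwich these between functionally ambiguous sets of class $\alpha$ in $E$ respecting the inclusions $\overline{E_q}^{(\alpha)}\subseteq E_{q'}$ for $q<q'$, then transport each such ambiguous set to a functionally ambiguous set $B_q$ in $X$ via ambiguous $\alpha$-embeddedness, arranging the monotonicity $B_q\subseteq B_{q'}$ for $q<q'$ (this requires intersecting/unioning countably many ambiguous sets, which stays within class $\alpha$, and is where the combinatorics of the dyadic scale must be handled). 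Finally one sets $g(x)=\inf\{q: x\in B_q\}$ (with the convention $g(x)=1$ if $x$ lies in no $B_q$); the monotone family $(B_q)$ guarantees that $g^{-1}([0,t))=\bigcup_{q<t}B_q$ and $g^{-1}((t,1])=\bigcup_{q>t}(X\setminus B_q)$ are of the $\alpha$'th functionally additive class in $X$, so $g\in K_\alpha^*(X)$, and $g|_E=f$ because the $B_q\cap E$ were chosen to bracket the level sets of $f$.

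The main obstacle is the bookkeeping in $(2)\Rightarrow(1)$: one needs the transported sets $B_q$ in $X$ to form a genuinely monotone family indexed by the dyadics, and ambiguous $\alpha$-embeddedness only hands over \emph{one} ambiguous set at a time with no coherence between different $q$. The fix is to process the countably many dyadics one by one, at stage $q$ replacing the raw transported set by its intersection with all $B_{q'}$ already chosen for $q'>q$ and its union with all $B_{q'}$ for $q'<q$ (a countable operation preserving functional ambiguity of class $\alpha$, using that countable unions and intersections of class-$\alpha$ functionally ambiguous sets are of class $\alpha+1$ at worst — so one must be slightly more careful and only take \emph{finitely} many corrections per stage, or use that a class-$\alpha$ ambiguous set intersected with finitely many such sets stays ambiguous of class $\alpha$), and checking the restriction to $E$ still separates $\{f<q\}$ from $\{f\ge q'\}$. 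Once the monotone family is in hand, verifying $g\in K_\alpha(X)$ is the routine computation of preimages of subbasic open sets $[0,t)$ and $(t,1]$.
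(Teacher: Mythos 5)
Your proposal is correct, and the two easy implications coincide with the paper's: your $(3)\Rightarrow(2)$ is, up to the cosmetic choice of levels $1/3<2/3$ in place of the paper's $\{0\},\{1\}$, exactly the paper's argument (extend $\chi_A$, pass to two disjoint sets of the $\alpha$'th functionally multiplicative class, apply Lemma~\ref{Lemma23}), and your $(1)\Rightarrow(3)$ is the paper's truncation trick run in the opposite direction. Where you genuinely diverge is in the substantial step. The paper proves $(2)\Rightarrow(3)$ by forming the envelopes $h_1\le h_2$ (extending $f$ by $\inf f(E)$ and $\sup f(E)$), verifying via Lemma~\ref{Lemma23}, Proposition~\ref{lemma:amb3} and Proposition~\ref{lemma:preimage2} that the level sets $h_2^{-1}([c,a])$ and $h_1^{-1}([b,d])$ can be separated by a $K_\alpha$-function, and then invoking the insertion theorem of Luke\v{s}--Mal\'y--Zaj\'{\i}\v{c}ek \cite{LMZ} to produce $g$ with $h_1\le g\le h_2$. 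You instead prove $(2)\Rightarrow(1)$ directly by a Urysohn/Lebesgue--Hausdorff-type construction: sandwich the level sets $\{f\le q\}\subseteq\cdot\subseteq\{f<q'\}$ between functionally ambiguous sets of class $\alpha$ in $E$ (again Lemma~\ref{Lemma23}, applied to the \emph{multiplicative} sets $\{f\le q\}$ and $\{f\ge q'\}$ --- as you note, one cannot separate two additive-class sets this way), transport them to $X$ by ambiguous $\alpha$-embeddedness, monotonize by finitely many corrections per stage of an enumeration of the dyadics (which preserves functional ambiguity of class $\alpha$, whereas countably many corrections would not), and set $g(x)=\inf\{q:x\in B_q\}$; the identities $g^{-1}([0,t))=\bigcup_{q<t}B_q$ and $g^{-1}((t,1])=\bigcup_{q>t}(X\setminus B_q)$ then give $g\in K_\alpha^*(X)$. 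This is in effect an inline proof of the special case of the insertion theorem that the paper cites, so your argument is self-contained and more elementary at the cost of the dyadic bookkeeping, while the paper's is shorter because it delegates exactly that bookkeeping to \cite{LMZ}. The one local blemish is your justification that $\chi_A\in K_\alpha^*(E)$ (``a finite union of multiplicative sets is of additive class'' is not the right reason); the correct one is simply that $A$ and $E\setminus A$ are \emph{ambiguous}, hence already of the $\alpha$'th functionally additive class.
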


\begin{proof}

 $(i) \Longrightarrow (ii)$. Take an arbitrary functionally ambiguous set $A$ of the class $\alpha$ in $E$ and consider its characteristic function
 $\chi_A$. Then $\chi_A\in K_\alpha^*(E)$, as is easy to check. Let $f\in K_\alpha(X)$ be en extension of $\chi_A$. Then the sets $f^{-1}(1)$ and
 $f^{-1}(0)$ are disjoint and belong to the $\alpha$'th functionally multiplicative class in $X$. According to Lemma~\ref{Lemma23} there exists a functionally ambiguous set $B$ of the class $\alpha$ in $X$ such that $f^{-1}(1)\subseteq B$ and $B\cap f^{-1}(0)=\O$. It remains to notice that $B\cap  E=f^{-1}(1)\cap E=\chi_A^{-1}(1)=A$. Hence, $E$ is an ambiguously $\alpha$-embedded set in $X$.

 $(ii) \Longrightarrow (iii)$. Let $f\in K_\alpha(E,[c,d])$. Define
$$
h_1(x)=\left\{\begin{array}{ll}
               f(x), & \mbox{if}\,\, x\in E,\\
               {\rm inf} f(E), & \mbox{if}\,\, x\in X\setminus
               E,
             \end{array}
\right.
$$
$$
h_2(x)=\left\{\begin{array}{ll}
               f(x), & \mbox{if}\,\, x\in E,\\
               {\rm sup} f(E), & \mbox{if}\,\, x\in X\setminus
               E,
             \end{array}
\right.
$$
Then $c\le h_1(x)\le h_2(x)\le d$ for all $x\in X$.

We prove that for any reals $a<b$ there exists a function $h\in K_\alpha(X)$ such that
$$
h_2^{-1}([c,a])\subseteq h^{-1}(0)\quad\mbox{and}\quad h_1^{-1}([b,d])\subseteq h^{-1}(1).
$$

Fix $a<b$. Without loss of generality we may assume that
$$
{\rm inf} f(E)\le a<b\le {\rm sup}f(E).
$$
Denote
$$
A_1=f^{-1}([c,a]),\quad
A_2=f^{-1}([b,d]).
$$
Then $A_1$ and $A_2$ are disjoint sets of the $\alpha$'th functionally multiplicative class in~$E$. Using Lemma~\ref{Lemma23}, we choose a functionally ambiguous set $C$ of the class $\alpha$ in $E$ such that $A_1\subseteq C$ and $C\cap A_2=\O$. Since $E$ is an ambiguously $\alpha$-embedded set in $X$, there exists such a functionally ambiguous set $D$ of the class $\alpha$ in $X$ that $D\cap E=C$. Moreover, by Proposition~\ref{lemma:amb3} there exist sets $B_1$ and $B_2$ of the $\alpha$'th functionally multiplicative class in $X$ such that $A_i=E\cap B_i$ when $i=1,2$.  Let
$$
\tilde A_1=D\cap B_1,\quad \tilde A_2=(X\setminus D)\cap B_2.
$$
Then the sets $\tilde{A_1}$ and $\tilde{A_2}$ are disjoint and belong to the $\alpha$'th functionally multiplicative class in $X$. Moreover, $A_1=E\cap\tilde A_1$ and $A_2=E\cap \tilde A_2$. According to Proposition~\ref{lemma:preimage2} there is a function $h\in K_\alpha^*(X)$ such that
$$
h^{-1}(0)=\tilde A_1\quad\mbox{and}\quad h^{-1}(1)=\tilde A_2.
$$
According to \cite[Theorem 3.2]{LMZ} there exists a function $g\in K_\alpha(X)$ such that
$$h_1(x)\le g(x)\le h_2(x)$$ for all $x\in X$. Clearly, $g$ is an extension of $f$ and $g\in K_\alpha(X,[c,d])$.

$(iii)\Longrightarrow (i)$. Let $f\in K_\alpha^*(E)$ and let $|f(x)|\le C$ for all $x\in E$. Consider a function $g\in K_\alpha(X)$ which is an extension of $f$. Define a function $r:\mathbb R\to [-C,C]$, $r(x)=\min\{C,\max\{x,-C\}\}$. Obviously, $r$ is continuous. Let $h=r\circ g$. Then $h\in K_\alpha^*(X)$ and $h|_E=f$. Hence, $E$ is $K_\alpha^*$-embedded in $X$.
\end{proof}

\begin{lemma}\label{lemma:ambbb1}
    Let  $0< \alpha<\omega_1$, $X$ be a topological space and let $E\subseteq X$ be such an $\alpha$-embedded set in $X$ that for any set $A$ of the $\alpha$'th functionally multiplicative class in $X$ such that $E\cap A=\O$ the sets $E$ and $A$ are $\alpha$-separated. Then $E$ is an ambiguously $\alpha$-embedded set.
\end{lemma}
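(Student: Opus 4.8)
The plan is to upgrade the two ``one-sided'' witnesses produced by $\alpha$-embeddedness into a genuinely disjoint pair, using the $\alpha$-separation hypothesis to excise their overlap, and then to invoke Lemma~\ref{Lemma23}.

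First I would fix a functionally ambiguous set $A$ of the class $\alpha$ in $E$. Then $A$ and $E\setminus A$ both belong to the $\alpha$'th functionally multiplicative class in $E$, so the $\alpha$-embeddedness of $E$ supplies sets $P$ and $Q$ of the $\alpha$'th functionally multiplicative class in $X$ with $P\cap E=A$ and $Q\cap E=E\setminus A$. These need not be disjoint; however, $P\cap Q$ is again of the $\alpha$'th functionally multiplicative class in $X$ (that class is closed under finite intersections), and it misses $E$, since $(P\cap Q)\cap E=A\cap(E\setminus A)=\O$.

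Now the hypothesis applies to the set $P\cap Q$: there is $f\in K_\alpha(X)$ with $E\subseteq f^{-1}(0)$ and $P\cap Q\subseteq f^{-1}(1)$. I would put $Z=f^{-1}(0)$, which is of the $\alpha$'th functionally multiplicative class in $X$ because $\{0\}$ is closed in $\mathbb R$, and replace $P,Q$ by $P'=P\cap Z$ and $Q'=Q\cap Z$. Then $P'$ and $Q'$ are still of the $\alpha$'th functionally multiplicative class; since $E\subseteq Z$ we still have $P'\cap E=A$ and $Q'\cap E=E\setminus A$; and $P'\cap Q'\subseteq Z\cap f^{-1}(1)=\O$. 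Finally, Lemma~\ref{Lemma23} yields a functionally ambiguous set $B$ of the class $\alpha$ in $X$ with $P'\subseteq B\subseteq X\setminus Q'$, and a short check finishes it: $A=P'\cap E\subseteq B\cap E\subseteq(X\setminus Q')\cap E=E\setminus Q'=A$, so $B\cap E=A$, as required.

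The argument is essentially bookkeeping with the closure properties of the functionally multiplicative classes once the right reduction is in place; the only point that genuinely needs care — the ``obstacle'', such as it is — is recognising that the two sets coming from $\alpha$-embeddedness cannot be assumed disjoint, so one must first separate $E$ from their common part and trim both by the resulting functionally multiplicative ``buffer'' $Z$ before Lemma~\ref{Lemma23} becomes applicable.
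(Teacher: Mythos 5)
Your argument is correct and is essentially identical to the paper's own proof: the paper likewise takes the two functionally multiplicative witnesses $\tilde C_1,\tilde C_2$, separates $E$ from their intersection by a function $h\in K_\alpha(X)$, intersects both with $H=h^{-1}(0)$ to make them disjoint, and then applies Lemma~\ref{Lemma23}. Your $P',Q',Z$ play exactly the roles of the paper's $H_1,H_2,H$, and your final verification that $B\cap E=A$ is the only detail the paper leaves to the reader.
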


\begin{proof}
   Consider a functionally ambiguous set $C$ of the class $\alpha$ in $E$ and denote $C_1=C$, $C_2=E\setminus C$. Then there exist sets  $\tilde C_1$ and $\tilde C_2$ of the $\alpha$'th functionally multiplicative class in $X$ such that $\tilde C_i\cap E=C_i$ when $i=1,2$. Then the set $A=\tilde C_1\cap\tilde C_2$ is of the $\alpha$'th functionally multiplicative class in $X$ and $A\cap E=\O$. Let $h\in K_\alpha(X)$ be a function such that $E\subseteq h^{-1}(0)$ and $A\subseteq h^{-1}(1)$. Denote $H=h^{-1}(0)$ and $H_i=H\cap \tilde C_i$ when $i=1,2$. Since $H_1$ and $H_2$ are disjoint sets of the $\alpha$'th functionally multiplicative class in $X$, by Lemma~\ref{Lemma23} there is a functionally ambiguous set $D$ of the class $\alpha$ in $X$ such that $H_1\subseteq D\subseteq X\setminus H_2$. Obviously, $D\cap E=C$.
\end{proof}

\begin{theorem}\label{Extension_Not_Bounded}
 Let  $0< \alpha<\omega_1$ and let $E$ be a subset of a topological space $X$. Then the following conditions are equivalent:
  \begin{enumerate}
     \item $E$ is $K_\alpha$-embedded in $X$;

     \item $E$ is $\alpha$-embedded in $X$ and for any set $A$ of the $\alpha$'th functionally multiplicative class in $X$ such that $E\cap A=\O$ the sets $E$ and $A$ are $\alpha$-separated.
  \end{enumerate}
\end{theorem}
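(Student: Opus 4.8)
The plan is to prove both implications separately, leaning heavily on the bounded case (Proposition~\ref{prop:Extension_Of_Baire_Alpha}) together with the Blair--Hager-style decomposition of an unbounded function into a bounded part and a correction supported on a small ambiguous set.

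For the implication $(i)\Rightarrow(ii)$: assume $E$ is $K_\alpha$-embedded. First, $K_\alpha$-embedded trivially implies $K_\alpha^*$-embedded (restrict a bounded function, extend, then truncate by composing with a retraction $r(x)=\min\{C,\max\{x,-C\}\}$ as in the last paragraph of the previous proof), so by Proposition~\ref{prop:Extension_Of_Baire_Alpha} $E$ is ambiguously $\alpha$-embedded, hence $\alpha$-embedded by Proposition~\ref{lemma:amb3}. It remains to produce the separation property. Let $A$ be of the $\alpha$'th functionally multiplicative class in $X$ with $A\cap E=\varnothing$; by Lemma~\ref{lemma:preimage} pick $f\in K_\alpha^*(X)$ with $A=f^{-1}(0)$, so that $f>0$ on $E$. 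Then $g=1/f|_E$ is a well-defined function in $K_\alpha(E)$ (it is real-valued on $E$ because $f$ is strictly positive there); by hypothesis $g$ extends to some $G\in K_\alpha(X)$. The function $\varphi=\min\{1,\max\{0,\, 2-f\cdot G\}\}$ (or a similar truncation built from $f$ and $G$) is in $K_\alpha(X)$, equals $1$ on $E$ (where $f\cdot G=f\cdot(1/f)=1$), and equals $0$ on $A$ (where $f=0$, so $f\cdot G=0$ and $2-f\cdot G=2$... so one must instead arrange the truncation to vanish there — more carefully, set $\varphi=\max\{0,1-f\cdot|G|\}$ restricted appropriately, or use that $f\cdot G$ is $1$ on $E$ and $0$ on $A$ and separate the two closed-type sets $\{f\cdot G\le 1/2\}$ and $\{f\cdot G\ge 1\}$ via Proposition~\ref{lemma:preimage2}). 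The cleanest route: $f\cdot G\in K_\alpha(X)$, the set $P=(f\cdot G)^{-1}((-\infty,1/2])$ and $Q=(f\cdot G)^{-1}([1,\infty))$ are disjoint sets of the $\alpha$'th functionally multiplicative class with $A\subseteq P$ and $E\subseteq Q$, so Proposition~\ref{lemma:preimage2} gives a $K_\alpha$-function separating $A$ and $E$; that is exactly $\alpha$-separation.

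For the implication $(ii)\Rightarrow(i)$: assume $E$ is $\alpha$-embedded and has the stated separation property. By Lemma~\ref{lemma:ambbb1}, $E$ is ambiguously $\alpha$-embedded, hence $K_\alpha^*$-embedded by Proposition~\ref{prop:Extension_Of_Baire_Alpha}. Now take an arbitrary $f\in K_\alpha(E)$, possibly unbounded; compose with a homeomorphism $[-\infty,\infty]\to[0,1]$ or rather with the order homeomorphism $\tau(t)=\tfrac12+\tfrac1\pi\arctan t$ of $\mathbb R$ onto $(0,1)$, obtaining $f_0=\tau\circ f\in K_\alpha^*(E)$ with values in the open interval $(0,1)$. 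Extend $f_0$ to $g_0\in K_\alpha^*(X)$ with values in $[0,1]$ (truncating if necessary). The trouble is that $g_0$ may attain the values $0$ or $1$ off $E$, and there $\tau^{-1}\circ g_0$ blows up. Let $Z=g_0^{-1}(\{0,1\})=g_0^{-1}(0)\cup g_0^{-1}(1)$; this is a set of the $\alpha$'th functionally multiplicative class in $X$ disjoint from $E$. By the separation hypothesis there is $h\in K_\alpha(X)$ with $E\subseteq h^{-1}(1)$ and $Z\subseteq h^{-1}(0)$; we may assume $0\le h\le 1$. Then define $g_1=h\cdot g_0+\tfrac12(1-h)\in K_\alpha^*(X)$: on $E$ it equals $g_0=f_0\in(0,1)$, and everywhere it is a convex combination of $g_0(x)\in[0,1]$ and $\tfrac12$ with weight $h(x)$; since the weight on $g_0$ is $0$ wherever $g_0\in\{0,1\}$, the value $g_1(x)$ stays in the open interval $(0,1)$ for every $x\in X$ (the only way to reach an endpoint is $h(x)=1$ and $g_0(x)\in\{0,1\}$, impossible since those force $h(x)=0$; and $h(x)=0$ gives value $\tfrac12$). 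Finally $g=\tau^{-1}\circ g_1\in K_\alpha(X)$ is a well-defined real-valued $K_\alpha$-extension of $f$.

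The main obstacle is the gluing step in $(ii)\Rightarrow(i)$: one needs the extended bounded function to stay strictly inside the open interval so that the unbounding transformation $\tau^{-1}$ can be applied, and the separation hypothesis is precisely what lets us push the ``bad'' level set $\{g_0\in\{0,1\}\}$ away from $E$ and damp $g_0$ toward an interior value there. One should double-check that all the arithmetic operations and truncations used ($\cdot$, convex combinations, $\min$, $\max$, composition with the continuous maps $\tau$, $\tau^{-1}$ and retractions) preserve membership in $K_\alpha$ — this is standard since $K_\alpha$ is closed under composition with continuous maps and under the relevant algebraic operations, but the bookkeeping should be stated. The forward direction $(i)\Rightarrow(ii)$ is comparatively routine once one notices the trick of extending $1/f$.
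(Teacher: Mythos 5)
Your proposal is correct and follows essentially the same strategy as the paper's proof: the trick of extending $1/h$ and multiplying back for the separation half of $(i)\Rightarrow(ii)$, and, for $(ii)\Rightarrow(i)$, the combination of Lemma~\ref{lemma:ambbb1}, a bounded extension of a homeomorphic transform of $f$ via Proposition~\ref{prop:Extension_Of_Baire_Alpha}, and the separation hypothesis applied to the boundary level set of that extension. The only cosmetic differences are that you reach $\alpha$-embeddedness through Propositions~\ref{prop:Extension_Of_Baire_Alpha} and~\ref{lemma:amb3} rather than directly from $g^{-1}(0)$ as in the paper, and that you push the extension into the open interval by a convex combination toward $\tfrac12$ instead of the paper's product $h\cdot\psi$ (which in fact needs the same implicit truncation of the separating function that you make explicit).
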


\begin{proof}
 $(i) \Longrightarrow (ii)$. Let $C\subseteq E$ be a set of the $\alpha$'th functionally multiplicative class in $E$. Then by Lemma~\ref{lemma:preimage} we choose a function $f\in K_\alpha^*(E)$ such that $C=f^{-1}(0)$. If $g\in K_\alpha(X)$ is an extension of $f$, then the set $B=g^{-1}(0)$ belongs to the $\alpha$'th functionally multiplicative class in $X$ and $B\cap E=C$. Hence, $E$ is an $\alpha$-embedded set in $X$.

 Now consider a set $A$ of the $\alpha$'th functionally multiplicative class in $X$ such that $E\cap A=\O$. According to Lemma~\ref{lemma:preimage} there is a function $h\in K_\alpha^*(X)$ such that $A=h^{-1}(0)$. For all $x\in E$ let $\displaystyle f(x)=\frac{1}{h(x)}$. Then $f\in K_\alpha(E)$. Let $g\in K_\alpha(X)$ be an extension of $f$. For all $x\in X$ let $\varphi(x)=g(x)\cdot h(x)$. Clearly, $\varphi\in K_\alpha(X)$. It is not hard to verify that $E\subseteq \varphi^{-1}(1)$ and $A\subseteq \varphi^{-1}(0)$.

 $(ii)\Longrightarrow (i)$.  Remark that according to Lemma~\ref{lemma:ambbb1} the set $E$ is ambiguously $\alpha$-embedded in $X$.

 Let $f\in K_\alpha(E)$ and let $\varphi:\mathbb R\to (-1,1)$ be a homeomorphism.
 Using Proposition~\ref{prop:Extension_Of_Baire_Alpha} to the function $\varphi\circ f:E\to [-1,1]$ we have that there exists a function $h\in K_\alpha(X,[-1,1])$ such that $h|_E=\varphi\circ f$. Let
  $$
  A=h^{-1}(-1)\cup h^{-1}(1).
  $$
  Then $A$ belongs to the $\alpha$'th functionally multiplicative class in $X$ and $A\cap E=\O$. Therefore, there exists a function $\psi\in K_\alpha(X)$ such that $A\subseteq\psi^{-1}(0)$ and $E\subseteq\psi^{-1}(1)$.
  For all $x\in X$ define
  $$
  g(x)=\varphi^{-1}(h(x)\cdot \psi(x)).
  $$
  Remark that $g\in K_\alpha(X)$ and $g|_E=f$.
  \end{proof}

  \begin{corollary}
     Let $0< \alpha<\omega_1$ and let $E$ be a subset of the $\alpha$'th functionally multiplicative class of a topological space $X$. Then the following conditions are equivalent:
  \begin{enumerate}
     \item $E$ is $K_\alpha$-embedded in $X$;

     \item $E$ is $\alpha$-embedded in $X$.
  \end{enumerate}
  \end{corollary}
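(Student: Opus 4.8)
The plan is to deduce both implications directly from Theorem~\ref{Extension_Not_Bounded}, exploiting the fact that when $E$ itself lies in the $\alpha$'th functionally multiplicative class the separation hypothesis appearing there becomes automatic. The implication $(i)\Rightarrow(ii)$ needs no hypothesis on $E$ at all: if $E$ is $K_\alpha$-embedded in $X$, then condition $(ii)$ of Theorem~\ref{Extension_Not_Bounded} holds, and its first clause is precisely that $E$ is $\alpha$-embedded in $X$.

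For $(ii)\Rightarrow(i)$ I would verify the two requirements of condition $(ii)$ of Theorem~\ref{Extension_Not_Bounded} and then invoke that theorem. The first requirement, that $E$ be $\alpha$-embedded, is assumed. For the second, let $A$ be a set of the $\alpha$'th functionally multiplicative class in $X$ with $E\cap A=\varnothing$; since $E$ is also of the $\alpha$'th functionally multiplicative class in $X$ by hypothesis, $E$ and $A$ are two disjoint sets of that class, so Proposition~\ref{lemma:preimage2} produces a function $f\in K_\alpha(X)$ with $E\subseteq f^{-1}(0)$ and $A\subseteq f^{-1}(1)$, i.e. $E$ and $A$ are $\alpha$-separated. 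Thus condition $(ii)$ of Theorem~\ref{Extension_Not_Bounded} is met and $E$ is $K_\alpha$-embedded in $X$.

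I do not anticipate a genuine obstacle here: the corollary is the observation that for subsets of multiplicative class $\alpha$ the extra ``$\alpha$-separated from disjoint multiplicative sets'' clause of Theorem~\ref{Extension_Not_Bounded} is redundant, exactly as the Blair--Hager criterion degenerates for functionally closed subsets. The only point to keep straight is that $E$ being of the $\alpha$'th functionally multiplicative class in $X$ (not merely in itself) is what lets Proposition~\ref{lemma:preimage2} apply to the pair $E,A$ inside $X$.
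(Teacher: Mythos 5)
Your argument is correct and is exactly the intended derivation: the paper states this as an immediate corollary of Theorem~\ref{Extension_Not_Bounded}, with the separation clause becoming automatic via Proposition~\ref{lemma:preimage2} precisely because $E$ itself lies in the $\alpha$'th functionally multiplicative class in $X$. Nothing is missing.
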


\section{$K_1^*$-embedding versus $K_1$-embedding}

A family $\mathcal U$ of non-empty open sets of a space $X$ is called {\it a $\pi$-base} \cite{EGT} if for any non-empty open set $V$ of
$X$ there is $U\in\mathcal U$ with $V\subseteq U$.

\begin{proposition}\label{3}
  Let $X$ be a perfect space of the first category with a countable $\pi$-base.  Then there exist disjoint $F_\sigma$- and $G_\delta$-subsets $A$ and $B$ of $X$ which are dense in $X$ and $X=A\cup B$.
\end{proposition}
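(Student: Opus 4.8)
The plan is to build the sets $A$ and $B$ by a simultaneous induction over a countable $\pi$-base $\{U_n : n\in\mathbb N\}$ of $X$, keeping control of two things at once: that inside every basic open set we have put a point of the part of $A$ currently being built and a point of $B$, and that $A$ (resp.\ $B$) will be $F_\sigma$ (resp.\ $G_\delta$, and vice versa) in $X$. The hypothesis that $X$ is of the first category is what lets us cover $X$ by a countable increasing sequence of nowhere dense closed sets $X=\bigcup_{n=1}^\infty N_n$, and I would use these $N_n$ as the ``layers'' along which $A$ and $B$ are assembled: on the $n$-th step one chooses finitely (or countably) many points in $U_n\setminus N_{k}$ for appropriate $k$, alternately declaring them to lie in $A$ or in $B$, so that $A=\bigcup_n (\text{finite sets in layer }n)$ and $B=\bigcup_n(\text{finite sets in layer }n)$ are $F_\sigma$. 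Since $X$ is perfect, every closed set — in particular every finite set, and every countable union of finite sets that is closed in a subspace — is $G_\delta$; combining this with the nowhere-density of the $N_n$ I would arrange that each of $A$, $B$ is also the complement (in the relevant sense) of the other up to a first-category error that can be absorbed, giving that both are simultaneously $F_\sigma$ and $G_\delta$.

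In more detail, I would first fix $X=\bigcup_{n\in\mathbb N}N_n$ with each $N_n$ closed nowhere dense and $N_n\subseteq N_{n+1}$. Because $X$ has a countable $\pi$-base $\{U_n\}$, and each $N_n$ is nowhere dense, for every pair $(n,m)$ the open set $U_n\setminus N_m$ is non-empty whenever $U_n$ is, so I can recursively pick distinct points $a_{n,m}\in U_n\setminus(N_m\cup F_{n,m})$ and $b_{n,m}\in U_n\setminus(N_m\cup F_{n,m}\cup\{a_{n,m}\})$, where $F_{n,m}$ denotes the finite set of points chosen at earlier stages; this is possible precisely because removing finitely many points and a nowhere dense closed set from a non-empty open set of a perfect (hence crowded, having no isolated points — which I should check, or instead argue directly that $U_n\setminus N_m$ is infinite) space leaves something non-empty. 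Then set $A=\{a_{n,m}: n,m\in\mathbb N\}$ and $B=\{b_{n,m}:n,m\in\mathbb N\}$, so $A\cap B=\emptyset$ and both meet every $U_n$, hence both are dense. To get $X=A\cup B$ I would instead not take $A$ and $B$ as the whole of these point sets but arrange, layer by layer, that $A\cup B$ exhausts $X$: on layer $n$, after reserving the required dense-selection points, put every remaining point of $N_n\setminus N_{n-1}$ into $A$ or into $B$ according to a fixed rule that preserves the Borel-class bookkeeping (e.g.\ all leftover points of an even layer into $A$, of an odd layer into $B$). Since $N_n\setminus N_{n-1}$ is the difference of two closed sets, it is $F_\sigma$ and, by perfectness, also $G_\delta$; a countable union of such sets stays $F_\sigma$, so $A$ and $B$ remain $F_\sigma$, and since $B=X\setminus A$ each is then also $G_\delta$.

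The main obstacle is keeping the two requirements — density of both pieces in every basic open set, and the exact $F_\sigma\cap G_\delta$ structure — compatible while also achieving $X=A\cup B$; the temptation to just take two disjoint countable dense sets fails to cover $X$, and partitioning the leftover points carelessly can destroy the $G_\delta$ property. The fix is the layered construction above: writing $A=A_0\cup\bigcup_n(N_n\setminus N_{n-1})_{\text{assigned to }A}$ with $A_0$ the countable ``density reservoir'', each summand is $F_\sigma$ and $G_\delta$ in $X$ by perfectness, and because the pieces assigned to $A$ and to $B$ partition $X$ the set $B$ is exactly $X\setminus A$, so it too is simultaneously $F_\sigma$ and $G_\delta$. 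I would close by verifying that a countable union of sets each of which is both $F_\sigma$ and $G_\delta$ need not in general be $G_\delta$, so the argument genuinely uses that the union here is the complement of a similar union — i.e.\ the real content is the partition $X=A\sqcup B$ together with perfectness, not merely a union estimate.
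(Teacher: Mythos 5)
Your overall skeleton --- write $X=\bigcup_n N_n$ with $N_n$ closed and nowhere dense, pass to the disjoint layers $E_n=N_n\setminus N_{n-1}$, observe that each $E_n$ is simultaneously $F_\sigma$ and $G_\delta$ because $X$ is perfect, distribute the layers between $A$ and $B$, and recover the $G_\delta$ property of each set from the complementarity $B=X\setminus A$ --- is exactly the paper's. The gap lies in how you secure density of both pieces. You reserve individual points $a_{n,m},b_{n,m}$ and then assign each remaining layer wholesale to $A$ or $B$ by parity, and two things go wrong there. First, with the choice $a_{n,m}\in U_n\setminus N_m$ nothing prevents infinitely many reserved points of the opposite colour from accumulating inside a single layer $E_j$; in that case $A\cap E_j$ is an $F_\sigma$ set minus a countably infinite set, which need not be $F_\sigma$ (compare $[0,1]\setminus\mathbb Q$), so the claim that $A$ stays $F_\sigma$ collapses unless you re-index so that each layer contains only finitely many reserved points. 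Second, even after that repair your argument needs finite (or countable) sets of points to be closed (respectively $F_\sigma$), and needs $U_n\setminus N_m$ to contain infinitely many distinct points; both silently import a $T_1$-type hypothesis that is not in the statement of the proposition.

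The paper avoids both difficulties by never selecting points: it groups the layers into consecutive finite blocks $A_k=\bigcup_{n=m_{k-1}+1}^{n_k}E_n$ and $B_k=\bigcup_{n=n_k+1}^{m_k}E_n$, assigned alternately to $A$ and to $B$, choosing the cut indices $n_k<m_k$ adaptively so that $A_k\cap V_k\ne\emptyset$ and $B_k\cap V_k\ne\emptyset$; this is always possible because any finite union of the closed nowhere dense sets $N_n$ is still closed and nowhere dense, so the remaining layers must meet $V_k$. Then $A$ and $B$ are literally countable unions of the sets $E_n$, hence $F_\sigma$, complementary, and dense, with no separation axiom used anywhere. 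To salvage your version, either switch to this block-grouping, or enumerate the reserved pairs so that the $k$-th pair is chosen outside $N_k$ (forcing only finitely many reserved points per layer) and state the $T_1$ assumption explicitly.
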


\begin{proof}
Let $(V_n:n\in{\mathbb N})$ be a $\pi$-base in $X$ and $X=\bigcup\limits_{n=1}^\infty X_n$, where $X_n$ is a closed nowhere dense subset of~$X$ for every $n\ge 1$. Let $E_1=X_1$ and $E_n=X_n\setminus\bigcup\limits_{k<n}X_k$ for $n\ge 2$. Then $E_n$ is a nowhere dense $F_\sigma$- and $G_\delta$-subset of $X$ for every $n\ge 1$, $E_n\cap E_m=\O$ if $n\ne m$, and  $X=\bigcup\limits_{n=1}^\infty E_n$.

Let $m_0=0$. We choose a number $n_1\ge 1$ such that $(\bigcup\limits_{n=1}^{n_1}E_n)\cap V_1\ne\O$ and let $A_1=\bigcup\limits_{n=1}^{n_1}E_n$. Since $\overline{ X\setminus A_1}=X$, there exists a number $m_1>n_1$ such that $(\bigcup\limits_{n=n_1+1}^{m_1}E_n )\cap V_1\ne\O$. Set $B_1=\bigcup\limits_{n=n_1+1}^{m_1}E_n$. It follows from the equality $\overline{X\setminus (A_1\cup B_1)}=X$ that there exists $n_2>m_1$ such that $(\bigcup\limits_{n=m_1+1}^{n_2}E_n )\cap V_2\ne\O$. Further, there is such $m_2>n_2$ that $(\bigcup\limits_{n=n_2+1}^{m_2}E_n )\cap V_2\ne\O$. Let $A_2=\bigcup\limits_{n=m_1+1}^{n_2}E_n$ and $B_2=\bigcup\limits_{n=n_2+1}^{m_2}E_n$. Repeating this process, we obtain the sequence of numbers
  $$
  m_0<n_1<m_1<\dots<n_k<m_k<n_{k+1}<\dots
  $$
and the sequence of sets
  $$
  A_k=\bigcup\limits_{n=m_{k-1}+1}^{n_k}E_n,\quad B_k=\bigcup\limits_{n=n_k+1}^{m_k}E_n,\quad k\ge 1,
  $$
such that $A_k\cap V_k\ne\O$ and $B_k\cap V_k\ne\O$ for every $k\ge 1$.

Let $A=\bigcup\limits_{k=1}^\infty A_k$ and $B=\bigcup\limits_{k=1}^\infty B_k$. Clearly, $X=A\cup B$, $A\cap B=\O$ and $\overline{A}=\overline{B}=X$. Moreover, $A$ and $B$ are $F_\sigma$-sets in $X$. Therefore, $A$ and $B$ are $F_\sigma$- and $G_\delta$-subsets of $X$.
\end{proof}

We say that a topological space $X$ {\it hereditarily has a countable $\pi$-base} if every its closed subspace has a countable $\pi$-base.

\begin{proposition}\label{prop:hereditarilyBaire}
   Let $X$ be a hereditarily Baire space, $E$ be a perfectly normal ambiguously $1$-embedded subspace of $X$ which hereditarily has a countable $\pi$-base. Then $E$ is a hereditarily Baire space.
\end{proposition}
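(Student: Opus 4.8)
The plan is to argue by contradiction. Suppose $E$ is not hereditarily Baire; by the standard reformulation of this property, some nonempty closed subspace $F$ of $E$ is of the first category in itself. As a subspace of the perfectly normal space $E$, the space $F$ is perfectly normal, hence perfect; and since $E$ hereditarily has a countable $\pi$-base and $F$ is closed in $E$, the space $F$ has a countable $\pi$-base. Thus Proposition~\ref{3} applies to $F$ and yields a set $A\subseteq F$ which is simultaneously $F_\sigma$ and $G_\delta$ in $F$, is dense in $F$, and whose complement $B=F\setminus A$ is dense in $F$ as well. The strategy is to promote $A$ to a functionally ambiguous set of the first class first in $E$ and then in $X$, and to use it to exhibit the closure of $F$ in $X$ as of the first category in itself.

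First I realize $A$ inside $E$. Since $F$ is normal and $A$ is ambiguous of the first class in $F$, Theorem~\ref{properties}(3) gives that $A$ is functionally ambiguous of the first class in $F$. Likewise, $F$ is ambiguous of the first class in $E$ --- it is closed, hence $F_\sigma$, and it is $G_\delta$ because $E$ is perfect --- so $F$ is functionally ambiguous of the first class in $E$, again by Theorem~\ref{properties}(3). Moreover, $F$ is $0$-embedded in $E$ by Proposition~\ref{prop:examples}(i), hence $1$-embedded by Proposition~\ref{0_imply_alpha}. Two applications of Proposition~\ref{alpha_in_alpha} with $\alpha=1$, one for the functionally additive and one for the functionally multiplicative class, now show that $A$, being functionally ambiguous of the first class in $F$, is functionally ambiguous of the first class in $E$.

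Now I transfer to $X$. Since $E$ is ambiguously $1$-embedded in $X$, pick a functionally ambiguous set $\widetilde A$ of the first class in $X$ with $\widetilde A\cap E=A$, and set $\widetilde B=X\setminus\widetilde A$, which is again functionally ambiguous of the first class in $X$; in particular $\widetilde A$ and $\widetilde B$ are $F_\sigma$ in $X$. Let $\widetilde F$ be the closure of $F$ in $X$; this is a nonempty closed subspace of the hereditarily Baire space $X$, hence a nonempty Baire space. Put $P=\widetilde A\cap\widetilde F$ and $Q=\widetilde B\cap\widetilde F$, so that $P$ and $Q$ partition $\widetilde F$ and are $F_\sigma$ in $\widetilde F$. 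Since $A\subseteq F\subseteq E$, we get $P\cap F=\widetilde A\cap F=A$ and $Q\cap F=F\setminus A=B$; as $A$ and $B$ are dense in $F$ and $F$ is dense in $\widetilde F$, both $P\supseteq A$ and $Q\supseteq B$ are dense in $\widetilde F$. Writing $P=\bigcup_n P_n$ and $Q=\bigcup_n Q_n$ with $P_n,Q_n$ closed in $\widetilde F$, each $P_n$ is disjoint from the dense set $Q$ and each $Q_n$ is disjoint from the dense set $P$, so all the $P_n$ and $Q_n$ are nowhere dense in $\widetilde F$. Then $\widetilde F=\bigcup_n(P_n\cup Q_n)$ displays the nonempty Baire space $\widetilde F$ as of the first category in itself, a contradiction. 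Hence $E$ is hereditarily Baire.

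The step I expect to be the main obstacle is the middle one, the promotion of $A$ to a functionally ambiguous set of the first class in $X$. The difficulty is that $A$ decomposes only $F$, not $E$, so the ambiguous $1$-embeddedness of $E$ cannot be applied to $A$ inside $F$; one first has to certify that $A$ is functionally ambiguous of the first class in $E$, and for that one needs the closed set $F$ both to be $1$-embedded in $E$ and to be functionally ambiguous of the first class in $E$ --- precisely the places where perfect normality of $E$ is used, through Propositions~\ref{prop:examples} and~\ref{alpha_in_alpha} and Theorem~\ref{properties}. After that, taking $\widetilde B$ to be the complement of $\widetilde A$ sidesteps the need to extend $B$ separately, and intersecting with $\widetilde F$ recovers the dense disjoint pair witnessing first category.
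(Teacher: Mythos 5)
Your proof is correct and follows essentially the same route as the paper's: argue by contradiction, apply Proposition~\ref{3} to a nonempty closed first-category subspace of $E$, use perfect normality and ambiguous $1$-embeddedness to lift the resulting dense ambiguous partition to functionally ambiguous sets of the first class in $X$, and contradict the Baireness of the closure of that subspace in $X$. The only differences are that you spell out the promotion of $A$ from $F$ to $E$ (via Proposition~\ref{prop:examples}, Proposition~\ref{0_imply_alpha}, Proposition~\ref{alpha_in_alpha} and Theorem~\ref{properties}(3)), which the paper leaves implicit, and you phrase the final contradiction through the $F_\sigma$ decomposition into nowhere dense pieces rather than through two disjoint dense $G_\delta$-sets --- an equivalent finish.
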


\begin{proof}
  Assume that $E$ is not a hereditarily Baire space. Then there exists a nonempty closed set $C\subseteq X$ of the first category. Notice that $C$ is a perfectly normal space with a countable $\pi$-base. According to Proposition~\ref{3} there exist disjoint dense $F_\sigma$- and $G_\delta$-subsets $A$ and $B$ of $C$  such that  $C=A\cup B$. Since $C$ is $F_\sigma$- and $G_\delta$-set in $E$, the sets $A$ and $B$ are also $F_\sigma$ and $G_\delta$ in $E$. Therefore there exist disjoint functionally $F_\sigma$- and $G_\delta$-subsets $\tilde A$ and $\tilde B$ of $X$ such that  $A=\tilde A\cap E$ and $B=\tilde B\cap E$. Notice that the sets $\tilde A$ and $\tilde B$ are dense in $\overline{C}$. Taking into account that $X$ is hereditarily Baire, we have that $\overline{C}$ is a Baire space. It follows a contradiction, since $\tilde A$ and $\tilde B$ are disjoint dense $G_\delta$-subsets of $\overline{C}$.
\end{proof}

Remark that there exist a metrizable separable Baire space $X$ and its ambiguously $1$-embedded subspace $E$ which is not a Baire space. Indeed, let  $X=({\mathbb Q}\times \{0\})\cup ({\mathbb R}\times (0,1])$ and  $E={\mathbb Q}\times \{0\}$. Then $E$ is closed in $X$. Therefore, any $F_\sigma$- and $G_\delta$-subset $C$ of $E$ is also $F_\sigma$- and $G_\delta$- in $X$. Hence, $E$ is an ambiguously $1$-embedded set in $X$.

\begin{theorem}
  Let $X$ be a hereditarily Baire space and let $E\subseteq X$ be its perfect Lindel\"{o}f subspace which hereditarily has a countable $\pi$-base. Then $E$ is  $K_1^*$-embedded in $X$ if and only if $E$ is $K_1$-embedded in $X$.
\end{theorem}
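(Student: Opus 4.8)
The plan is to prove the nontrivial implication, $K_1^*$-embedded $\Rightarrow$ $K_1$-embedded, since the converse is routine and needs no hypothesis: given $f\in K_1^*(E)$ with $|f|\le C$, take any $K_1$-extension $g\in K_1(X)$ and compose it with the continuous retraction $r(t)=\min\{C,\max\{t,-C\}\}$, exactly as in the implication $(iii)\Rightarrow(i)$ of Proposition~\ref{prop:Extension_Of_Baire_Alpha}; then $r\circ g\in K_1^*(X)$ extends $f$.

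So suppose $E$ is $K_1^*$-embedded in $X$. First I would translate this into a statement about sets: by Proposition~\ref{prop:Extension_Of_Baire_Alpha} (with $\alpha=1$) it is equivalent to $E$ being ambiguously $1$-embedded in $X$, and then Proposition~\ref{lemma:amb3} gives that $E$ is $1$-embedded in $X$. Hence, by Theorem~\ref{Extension_Not_Bounded}, it remains only to verify the second half of its condition $(ii)$ for $\alpha=1$: every functionally $G_\delta$-set $A$ in $X$ with $A\cap E=\emptyset$ is $1$-separated from $E$. I plan to get this from Proposition~\ref{prop:lind}(b), which separates a Lindel\"of hereditarily Baire set from a disjoint functionally $G_\delta$-set; since $E$ is assumed Lindel\"of, the whole problem reduces to showing that $E$ is a hereditarily Baire space.

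That reduction is the heart of the argument, and it is where the remaining hypotheses are consumed. To apply Proposition~\ref{prop:hereditarilyBaire} to $E$ I need: $X$ hereditarily Baire (assumed); $E$ ambiguously $1$-embedded in $X$ (just obtained); $E$ hereditarily having a countable $\pi$-base (assumed); and $E$ perfectly normal. The last point follows because, in the (completely regular) setting of the paper, a perfect Lindel\"of space is perfectly normal: $E$ is regular and Lindel\"of, hence normal, and being perfect it is perfectly normal. Proposition~\ref{prop:hereditarilyBaire} then yields that $E$ is hereditarily Baire, Proposition~\ref{prop:lind}(b) gives the required $1$-separation of $E$ from any disjoint functionally $G_\delta$-set, and Theorem~\ref{Extension_Not_Bounded} concludes that $E$ is $K_1$-embedded in $X$.

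The main obstacle is thus the single step ``$E$ is hereditarily Baire'', i.e. Proposition~\ref{prop:hereditarilyBaire} (which itself rests on the category decomposition in Proposition~\ref{3}); once that is in hand the proof is just a chain of the characterizations established in Sections~4 and~5. The only fine point to watch is the ambient separation axiom: both Proposition~\ref{prop:lind}(b) and the passage ``perfect Lindel\"of $\Rightarrow$ perfectly normal'' are used under the blanket complete-regularity convention, so one should keep that convention in force (or else add ``completely regular'' to the hypotheses).
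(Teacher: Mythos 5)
Your proposal is correct and follows essentially the same route as the paper: Proposition~\ref{prop:Extension_Of_Baire_Alpha} to get ambiguous $1$-embedding, Proposition~\ref{prop:hereditarilyBaire} to get that $E$ is hereditarily Baire, Proposition~\ref{prop:lind}(b) for the $1$-separation, and Theorem~\ref{Extension_Not_Bounded} to conclude. You are in fact slightly more careful than the paper in checking that a perfect Lindel\"{o}f (regular) space is perfectly normal, which is the hypothesis Proposition~\ref{prop:hereditarilyBaire} actually requires.
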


\begin{proof} Since the sufficiency is obvious, we only need to prove the necessity.

 According to Proposition~\ref{prop:Extension_Of_Baire_Alpha} the set $E$ is ambiguously $1$-embedded in $X$. Using Proposition~\ref{prop:hereditarilyBaire}, we have $E$ is a hereditarily Baire space. Since $E$ is Lindel\"{o}f, Proposition~\ref{prop:lind} (b) implies that $E$ is $1$-separated from any functionally $G_\delta$-set $A$ of $X$ such that $A\cap E=\O$. Therefore, by Theorem~\ref{Extension_Not_Bounded} the set $E$ is $K_1$-embedded in $X$.
\end{proof}

\section{A generalization of the Kuratowski theorem}

K.~Kuratowski \cite[p.~445]{Ku1} proved that every mapping $f\in K_\alpha(E,Y)$ has an extension $g\in K_\alpha(X,Y)$ if the case $X$ is a metric space, $Y$ is a Polish space and $E\subseteq X$ is a set of the multiplicative class $\alpha>0$.

In this section we will prove that the Kuratowski Extension Theorem is still valid if $X$ is a topological space and $E$ is a $K_\alpha$-embedded subset of $X$.

We say that a subset $A$ of a space $X$ is {\it discrete} if  any point $a\in A$ has a neighborhood $U\subseteq X$ such that $U\cap A=\{a\}$.

\begin{theorem}[Theorem 2.11 \cite{K4}]\label{8Theorem211}
 Let $X$ be a topological space, $Y$ be a metrizable separable space, $0\le\alpha<\omega_1$ and  $f\in K_\alpha(X,Y)$. Then there exists a sequence $(f_n)_{n=1}^\infty$  such that
 \begin{enumerate}
   \item  $f_n\in K_\alpha(X,Y)$ for every $n$;

   \item $(f_n)_{n=1}^\infty$ is uniformly convergent to $f$;

   \item $f_n(X)$ is at most countable and discrete for every $n$.
 \end{enumerate}
\end{theorem}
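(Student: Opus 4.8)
The plan is to treat each $n\in\mathbb N$ separately and to construct $f_n\in K_\alpha(X,Y)$ that is constant on each member of a countable partition of $X$ into functionally ambiguous sets of the class $\alpha$, takes its values in a fixed at most countable discrete subset of $Y$, and satisfies $d(f(x),f_n(x))<\tfrac1n$ for all $x\in X$; here $d$ is a fixed metric generating the topology of $Y$, and we may assume $Y\ne\O$. Uniform convergence of $(f_n)_{n=1}^\infty$ to $f$ is then automatic.

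First I would fix the value set. Since $Y$ is separable, for $\varepsilon=\tfrac1{2n}$ a Zorn's lemma argument yields a maximal $\varepsilon$-separated subset $S=\{s_k:k\in\mathbb N\}$ of $Y$, i.e.\ $d(s_j,s_k)\ge\varepsilon$ for $j\ne k$ (padding with repetitions if $S$ happens to be finite). It is at most countable, because a countable dense subset of $Y$ meets each of the pairwise disjoint balls $B(s_k,\varepsilon/2)$; it is $\varepsilon$-dense by maximality, so $Y=\bigcup_{k}B(s_k,\varepsilon)$; and it is discrete in $Y$, since $B(s_k,\varepsilon/2)\cap S=\{s_k\}$. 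As every subset of a discrete set is discrete, any map into $S$ will satisfy (3) automatically.

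Next, since $f\in K_\alpha(X,Y)$, each $W_k:=f^{-1}(B(s_k,\varepsilon))$ is of the $\alpha$'th functionally additive class in $X$ (functionally open when $\alpha=0$) and $X=\bigcup_k W_k$. Applying Lemma~\ref{8Lemma22} I obtain mutually disjoint functionally ambiguous sets $B_k$ of the class $\alpha$ in $X$ with $B_k\subseteq W_k$ and $X=\bigcup_k B_k$, and I set $f_n(x)=s_k$ for $x\in B_k$, which is well defined. If $x\in B_k$ then $f(x)\in B(s_k,\varepsilon)$, so $d(f(x),f_n(x))<\varepsilon=\tfrac1{2n}$, giving (2). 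For (1), an open $V\subseteq Y$ has $f_n^{-1}(V)=\bigcup\{B_k:s_k\in V\}$, a countable union of functionally ambiguous sets of the class $\alpha$; the $\alpha$'th functionally additive class is closed under countable unions (and for $\alpha=0$ a countable union of functionally clopen sets is functionally open), so $f_n^{-1}(V)$ is of the $\alpha$'th functionally additive class and $f_n\in K_\alpha(X,Y)$. Since $f_n(X)\subseteq S$, this also gives (3).

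The one genuine subtlety I anticipate is (3): disjointifying a cover of $X$ by $f$-preimages of balls centred at a countable \emph{dense} subset of $Y$ yields a countable, but in general not discrete, image, so it is essential to start from a maximal $\varepsilon$-separated --- hence discrete --- set of centres. A secondary bookkeeping point is to keep the case $\alpha=0$ inside the argument, where ``functionally ambiguous of the class $0$'' reads ``functionally clopen'' and the observation above guarantees that $f_n$ is in fact continuous.
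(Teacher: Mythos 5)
Your proof is correct and follows essentially the same route as the paper: fix a countable discrete $\varepsilon$-net in $Y$, pull back the balls to sets of the $\alpha$'th functionally additive class, disjointify via Lemma~\ref{8Lemma22} into functionally ambiguous sets of the class $\alpha$, and define $f_n$ to be piecewise constant. The only difference is cosmetic --- you construct the maximal $\varepsilon$-separated set explicitly where the paper cites Kuratowski, and you spell out the verification that $f_n\in K_\alpha(X,Y)$.
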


\begin{proof}
  Consider a metric $d$ on $Y$ which generates its topological structure. Since $(Y,d)$ is metric separable space, for every $n$ there is a subset $Y_n=\{y_{i,n}:i\in I_n\}$ of $Y$ such that $Y_n$ is discrete, $|I_n|\le\aleph_0$ and for any $y\in Y$ there exists $i\in I_n$ such that
$d(y,y_{i,n})<1/n$ (see \cite[p.~226]{Ku1}).

For every $n\in\mathbb N$ and $i\in I_n$ put
$A_{i,n}=\{x\in X: d(f(x),y_{i,n})<1/n\}$. Then each $A_{i,n}$ belongs to the $\alpha$'th functionally additive class in $X$ and
$\bigcup\limits_{i\in I_n}A_{i,n}=X$ for every $n$. According to Lemma~\ref{8Lemma22} for every $n$ we can choose a sequence $(F_{i,n})_{i\in I_n}$ of disjoint functionally ambiguous sets of the class $\alpha$ such that $F_{i,n}\subseteq
A_{i,n}$ and $\bigcup\limits_{i\in I_n}F_{i,n}=X$.

For all $x\in X$ and $n\in\mathbb N$ let $f_n(x)= y_{i,n}$ if $x\in F_{i,n}$ for some $i\in I_n$. Notice that  $f_n\in K_\alpha(X,Y)$ for every $n\in\mathbb N$.

It remains to prove that the sequence $(f_n)_{n=1}^\infty$ is uniformly convergent to  $f$. Indeed, fix  $x\in X$ and $n\in{\mathbb
N}$. Then there exists $i\in I_n$ such that $x\in F_{i,n}$. Since $F_{i,n}\subseteq A_{i,n}$, $d(f(x),f_n(x))=d(f(x),y_{i,n})<\frac{1}{n}$, which completes the proof.
\end{proof}

Recall that a family $(A_s:s\in S)$ of subsets of a topological space $X$ is called {\it a partition of $X$} if $X=\bigcup\limits_{s\in S}A_s$ and $A_s\cap A_t=\O$ for all $s\ne t$.

\begin{proposition}\label{lemma:amb2}
  Let $0<\alpha<\omega_1$, $X$ be a topological space, $E\subseteq X$ be an $\alpha$-embedded set which is $\alpha$-separated from any disjoint with it set of the $\alpha$'th functionally multiplicative class in $X$ and let $(A_n:n\in\mathbb N)$ be a partition of $E$ by functionally ambiguous sets of the class $\alpha$ in $E$. Then there is a partition $(B_n:n\in\mathbb N)$ of $X$ by functionally ambiguous sets of the class $\alpha$ in $X$ such that  $A_n=E\cap B_n$ for every $n\in\mathbb N$.
\end{proposition}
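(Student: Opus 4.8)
The plan is to combine the hypotheses exactly as in the proof of Lemma~\ref{lemma:ambbb1}, but carried out ``in parallel'' for countably many pieces. Recall that since $E$ is $\alpha$-embedded, for each $n$ we may fix a set $\tilde A_n$ of the $\alpha$'th functionally multiplicative class in $X$ with $\tilde A_n\cap E=A_n$. These $\tilde A_n$ need not be disjoint and need not cover $X$, so the first task is to trim and reassemble them. Since $A_n$ is functionally ambiguous of the class $\alpha$ in $E$, the complement $E\setminus A_n=\bigcup_{m\ne n}A_m$ is of the $\alpha$'th functionally additive class in $E$, so it too is of the form $\tilde C_n\cap E$ for some set $\tilde C_n$ of the $\alpha$'th functionally multiplicative class in $X$ (here I use Proposition~\ref{lemma:amb3} together with $\alpha$-embeddedness, or simply $\alpha$-embeddedness applied to the additive-class complement). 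Then $A'_n:=\tilde A_n\cap\tilde C_n$ is of the $\alpha$'th functionally multiplicative class in $X$ and satisfies $A'_n\cap E=\varnothing$; moreover the ``defect'' set $A_0:=X\setminus\bigcup_n\tilde A_n$ is of the $\alpha$'th functionally additive class in $X$ and disjoint from $E$.

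First I would handle the separation from $E$. Put $A:=A_0\cup\bigcup_{n}A'_n$; this is a set of the $\alpha$'th functionally additive class in $X$ disjoint from $E$, hence (after enlarging $A$ to a functionally additive hull and using the hypothesis applied to $X\setminus A\supseteq E$, as in Lemma~\ref{lemma:ambbb1}) there is $h\in K_\alpha(X)$ with $E\subseteq h^{-1}(0)$ and $A\subseteq h^{-1}(1)$. Set $H:=h^{-1}(0)$, a set of the $\alpha$'th functionally multiplicative class in $X$ containing $E$ and disjoint from $A$. Replacing $\tilde A_n$ by $H\cap\tilde A_n$ we may now assume, in addition to $\tilde A_n\cap E=A_n$, that the sets $\tilde A_n$ are pairwise disjoint (because on $H$ the sets $\tilde C_n$, hence the $A'_n$, are empty, so $H\cap\tilde A_n\cap\tilde A_m\subseteq H\cap A'_m=\varnothing$) and that they are all contained in $H$.

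Next I would take care of the covering. The set $R:=H\setminus\bigcup_n\tilde A_n$ is of the $\alpha$'th functionally additive class in $X$ and $R\cap E=\varnothing$ (since $E=\bigcup_n A_n\subseteq\bigcup_n\tilde A_n$), while $X\setminus H$ is of the $\alpha$'th functionally additive class and also disjoint from $E$. Dump both of these leftover pieces into the first block: define $B_1:=\tilde A_1\cup R\cup(X\setminus H)$ and $B_n:=\tilde A_n$ for $n\ge 2$. Then $(B_n)$ is a partition of $X$, $B_n\cap E=A_n$ for every $n$, and I would then invoke Lemma~\ref{8Lemma22} applied to the cover $(B_n)$ by sets of the $\alpha$'th functionally additive class to replace it by a partition of $X$ by \emph{functionally ambiguous} sets $(B'_n)$ of the class $\alpha$ with $B'_n\subseteq B_n$; since $B'_n\cap E\subseteq B_n\cap E=A_n$ and the $A_n$ partition $E$ while the $B'_n$ partition $X$, forcing equality $B'_n\cap E=A_n$ for all $n$. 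This gives the required partition.

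The main obstacle is bookkeeping the disjointness and covering simultaneously: a naive choice of the $\tilde A_n$ satisfies neither, and one must be careful that the trimming step used to achieve disjointness (intersecting with $H$) does not destroy the equality $\tilde A_n\cap E=A_n$ — this is exactly why $H$ must be chosen to contain $E$, which is where the $\alpha$-separation hypothesis is used, mirroring Lemma~\ref{lemma:ambbb1}. The only other point requiring a little care is checking that each set produced ($A'_n$, $R$, $X\setminus H$, the unions $B_n$) genuinely lies in the asserted additive or multiplicative class in $X$; these are all routine closure properties of the functionally additive and multiplicative classes under countable unions/intersections and finite intersections, so I would not belabor them.
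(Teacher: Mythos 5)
There is a genuine gap, and it sits precisely in the class bookkeeping you dismiss as routine. You extend each $A_n$ only to a set $\tilde A_n$ of the $\alpha$'th functionally \emph{multiplicative} class in $X$. That class is not closed under countable unions, and its members need not lie in the $\alpha$'th functionally additive class; so $\bigcup_n A'_n$ and $A_0=\bigcap_n(X\setminus\tilde A_n)$ belong only to the classes of level $\alpha+1$. Hence your set $A$ is not of the $\alpha$'th functionally additive class, $X\setminus A$ is not of the $\alpha$'th functionally multiplicative class, and no ``functionally additive hull'' produces a multiplicative-class-$\alpha$ set disjoint from $E$ to which the $\alpha$-separation hypothesis could be applied — so the function $h$ and the set $H$ are not available. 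The same defect kills the final step: $B_n=\tilde A_n$ for $n\ge 2$ and $B_1=\tilde A_1\cup R\cup(X\setminus H)$ are assembled from multiplicative-class-$\alpha$ pieces and are not of the $\alpha$'th functionally additive class, so Lemma~\ref{8Lemma22} does not apply to the cover $(B_n)$. There is also a secondary error in the disjointness step: $H\cap\tilde C_n$ is certainly not empty (it contains $E\setminus A_n$), and the inclusion $H\cap\tilde A_n\cap\tilde A_m\subseteq H\cap A'_m$ would require $\tilde A_n\subseteq\tilde C_m$, which is guaranteed only on $E$; a point of $\tilde A_n\cap\tilde A_m\setminus E$ lying in no $\tilde C_k$ escapes your argument.

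All of this is repaired by doing what your first sentence announces and then abandons: invoke Lemma~\ref{lemma:ambbb1} first, so that each $A_n$ extends to a functionally \emph{ambiguous} set $D_n$ of class $\alpha$ in $X$. Then $\bigcup_n D_n$ is of the $\alpha$'th functionally additive class, its complement is of the $\alpha$'th functionally multiplicative class and disjoint from $E$, and the separation hypothesis gives $f\in K_\alpha(X)$ with $D=f^{-1}(0)\supseteq E$; writing $X\setminus D=\bigcup_n E_n$ with $E_n$ functionally ambiguous of class $\alpha$, putting $C_n=D_n\cup E_n$ and disjointifying via $B_n=C_n\setminus\bigcup_{k<n}C_k$ keeps every set functionally ambiguous of class $\alpha$ and yields $B_n\cap E=A_n$. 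This is exactly the paper's proof. Your closing observation that a partition $(B'_n)$ of $X$ with $B'_n\cap E\subseteq A_n$ forces equality is correct, but the route you take to obtain a cover by additive-class-$\alpha$ sets does not go through.
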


\begin{proof}
 According to Proposition~\ref{lemma:ambbb1} for every $n\in\mathbb N$ there exists a functionally ambiguous set $D_n$ of the class $\alpha$ in $X$ such that $A_n=D_n\cap E$.
By the assumption there exists a function $f\in K_\alpha(X)$ such that $E\subseteq f^{-1}(0)$ and $X\setminus \bigcup\limits_{n=1}^\infty D_n\subseteq f^{-1}(1)$. Let $D=f^{-1}(0)$. Then the set $X\setminus D$ is of the $\alpha$'th functionally additive class in  $X$. Then there exists a sequence $(E_n)_{n=1}^\infty$ of functionally ambiguous set of the class $\alpha$ in $X$ such that $X\setminus
D=\bigcup\limits_{n=1}^\infty E_n$.  For every $n\in\mathbb N$ denote $C_n=E_n\cup D_n$. Then all the sets $C_n$ are functionally ambiguous of the class $\alpha$ in $X$ and $\bigcup\limits_{n=1}^\infty C_n=X$. Let $B_1=C_1$ and
$B_n=C_n\setminus \left(\bigcup\limits_{k<n} C_k\right)$ for
$n\ge 2$. Clearly, every $B_n$ is a functionally ambiguous set of the class $\alpha$ in $X$, $B_n\cap B_m=\O$ if $n\ne m$ and
$\bigcup\limits_{n=1}^\infty B_n=\bigcup\limits_{n=1}^\infty C_n=X$. Moreover, $B_n\cap E=A_n$ for every $n\in\mathbb N$.
\end{proof}

Let $0\le\alpha<\omega_1$, $X$ and  $Y$  be topological spaces and $E\subseteq X$. We say that a collection $(X,E,Y)$ has {\it the $K_\alpha$-extension property} if every mapping $f\in K_\alpha(E,Y)$ can be extended to a mapping $g\in K_\alpha(X,Y)$.

\begin{theorem}\label{th:Extension_H_alpha}
   \renewcommand{\theenumi}{\roman{enumi}}
  Let $0<\alpha<\omega_1$ and let $E$ be a subset of a topological space $X$. Then the following conditions are equivalent:
  \begin{enumerate}
    \item $E$ is $K_\alpha$-embedded in $X$;

    \item $(X,E,Y)$ has the $K_\alpha$-extension property for any Polish space $Y$.
  \end{enumerate}
\end{theorem}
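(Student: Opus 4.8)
The plan is to prove the two implications separately. The implication $(ii)\Longrightarrow(i)$ is the easy direction: specialize to $Y=\mathbb R$, which is a Polish space, and observe that $K_\alpha(E,\mathbb R)=K_\alpha(E)$ and $K_\alpha(X,\mathbb R)=K_\alpha(X)$; thus the $K_\alpha$-extension property for $(X,E,\mathbb R)$ is literally the statement that $E$ is $K_\alpha$-embedded in $X$.

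For $(i)\Longrightarrow(ii)$, assume $E$ is $K_\alpha$-embedded in $X$ and fix a Polish space $Y$ with a complete metric $d$ bounded by $1$, and a mapping $f\in K_\alpha(E,Y)$. First I would record the consequences of $K_\alpha$-embeddedness supplied by Theorem~\ref{Extension_Not_Bounded}: $E$ is $\alpha$-embedded in $X$, and $E$ is $\alpha$-separated from every set of the $\alpha$'th functionally multiplicative class in $X$ disjoint from $E$; hence, by Lemma~\ref{lemma:ambbb1}, $E$ is ambiguously $\alpha$-embedded, and by Proposition~\ref{lemma:amb2} every partition of $E$ into functionally ambiguous sets of class $\alpha$ lifts to such a partition of $X$. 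Next, apply Theorem~\ref{8Theorem211} to obtain a sequence $(f_n)_{n=1}^\infty$ in $K_\alpha(E,Y)$ converging uniformly to $f$, where each $f_n(E)=\{y_{k,n}:k\}$ is at most countable and discrete, so that the level sets $A_{k,n}=f_n^{-1}(y_{k,n})$ form a countable partition of $E$ into functionally ambiguous sets of class $\alpha$. By passing to the telescoped sequence I may additionally assume $d(f_n(x),f_{n+1}(x))<2^{-n}$ for all $x\in E$ and all $n$.

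Now I would build the extensions inductively so as to preserve this Cauchy control on $X$. Using Proposition~\ref{lemma:amb2}, lift $(A_{k,1})_k$ to a partition $(B_{k,1})_k$ of $X$ and define $g_1\in K_\alpha(X,Y)$ by $g_1\equiv y_{k,1}$ on $B_{k,1}$. Having constructed $g_n$ taking the value $y_{k,n}$ on a set $B_{k,n}$ of the partition, refine the partition of $E$ so that on each piece $A_{k,n}$ the function $f_{n+1}$ is constant, equal to some $y_{j,n+1}$ with $d(y_{k,n},y_{j,n+1})<2^{-n}$ (possible since $A_{k,n}\cap f_{n+1}^{-1}(y_{j,n+1})$ is again functionally ambiguous of class $\alpha$ in $E$); lift this refinement, intersected with $B_{k,n}$, to a partition of $X$ refining $(B_{\cdot,n})$, and let $g_{n+1}$ take the corresponding value $y_{j,n+1}$ on the lifted piece. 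Then $g_{n+1}\in K_\alpha(X,Y)$, it agrees with $f_{n+1}$ on $E$ (since the lifts restrict correctly to $E$), and $d(g_n(x),g_{n+1}(x))<2^{-n}$ for every $x\in X$. Hence $(g_n)$ is uniformly Cauchy, and by completeness of $d$ it converges uniformly to some $g:X\to Y$; a uniform limit of $K_\alpha$-mappings into a metrizable space is again $K_\alpha$ (this follows from Theorem~\ref{8Theorem211}-type arguments, or directly: $g^{-1}(U)$ for open $U$ is an appropriate countable combination of the $g_n^{-1}$ of controlled enlargements and shrinkings of $U$). Finally $g|_E=\lim f_n=f$, so $(X,E,Y)$ has the $K_\alpha$-extension property.

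The main obstacle is the inductive bookkeeping in the third paragraph: one must keep the lifted partitions of $X$ nested so that the values $g_n(x)$ themselves form a Cauchy sequence at every point of $X$ (not merely on $E$), and must verify at each stage that the refined pieces in $E$ are still functionally ambiguous of class $\alpha$ so that Proposition~\ref{lemma:amb2} applies — this uses that functionally ambiguous sets of class $\alpha$ are closed under finite intersections and under the countable partitioning of Lemma~\ref{8Lemma22}. The supporting lemma that a uniform limit of $K_\alpha(X,Y)$-mappings lies in $K_\alpha(X,Y)$ should be isolated and proved first.
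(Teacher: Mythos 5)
Your overall architecture is the same as the paper's: approximate $f$ uniformly by countably-valued mappings $f_n\in K_\alpha(E,Y)$ via Theorem~\ref{8Theorem211}, lift the (refined) level-set partitions of $E$ to nested partitions of $X$ by functionally ambiguous sets of class $\alpha$ using Proposition~\ref{lemma:amb2}, define step mappings $g_n$ on $X$, and pass to the uniform limit. The easy direction via $Y=\mathbb R$ is also as in the paper. However, the step you yourself flag as ``the main obstacle'' is precisely where the proposal has a genuine gap, and the claim you assert there --- that $d(g_n(x),g_{n+1}(x))<2^{-n}$ for \emph{every} $x\in X$ --- does not follow from the construction as you describe it. When you lift the refined partition of $E$ and intersect it with $B_{k,n}$, you obtain pieces $D'_j\cap B_{k,n}$ for \emph{all} indices $j$, including those whose trace $A'_j$ on $E$ is empty or is contained in some $A_{k',n}$ with $k'\ne k$. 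Such a piece may well be nonempty in $X$, yet the value $y_{j,n+1}$ you assign to it is close to $y_{k',n}$ (or to nothing at all), not to $y_{k,n}$, so the $2^{-n}$ bound fails on it. Since these orphaned pieces are disjoint from $E$, agreement with $f_{n+1}$ on $E$ is not violated, but the pointwise Cauchy property on $X\setminus E$ --- which is the whole point of the nesting --- is lost.

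To close the gap one must reassign the orphaned pieces: merge every $D'_j\cap B_{k,n}$ whose trace on $E$ misses $A_{k,n}$ into some piece whose trace is a nonempty subset of $A_{k,n}$, and verify that the merged sets are still functionally ambiguous of class $\alpha$ (within a countable partition by ambiguous sets this is fine). This in turn requires maintaining inductively the invariant that a nonempty lifted piece always has nonempty trace on $E$ --- the paper's property~(C) --- since otherwise a nonempty $B_{k,n}$ with $A_{k,n}=\emptyset$ leaves you with no admissible target value. The paper handles exactly this point with the sets $K=C_{i_1,\dots,i_n}\setminus\bigcup_k D_{i_1,\dots,i_n,k}$ and $H$, using the $\alpha$-separation of $E$ from disjoint functionally multiplicative sets (the second half of Theorem~\ref{Extension_Not_Bounded}(ii)), a hypothesis you record at the outset but never actually invoke in the induction; its role (or that of an equivalent merging argument) is indispensable. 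The rest of your outline, including the telescoped $2^{-n}$ control on $E$ and the closure of $K_\alpha(X,Y)$ under uniform limits, is consistent with the paper.
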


\begin{proof}

Since the implication (ii) $\Rightarrow$ (i) is obvious, we only need to prove the implication (i) $\Rightarrow$ (ii). Let $Y$ be a Polish space with a metric $d$ which generates its topological structure such that $(Y,d)$ is complete and let $f\in K_\alpha(E,Y)$.

It follows from Theorem~\ref{8Theorem211} that there exists a sequence of mappings $f_n\in K_\alpha(E,Y)$ which is uniformly convergent to $f$ on $E$. Moreover, for every $n\in\mathbb N$ the set $f_n(E)=\{y_{i_n,n}: i_n\in I_n\}$ is at most countable and discrete. We may assume that each $f_n(E)$ consists of distinct points.

For every $n\in\mathbb N$ and for each $(i_1,\dots, i_n)\in I_1\times\dots\times I_n$ let
  $$
  B_{i_1,\dots, i_n}=f_1^{-1}(y_{i_1,1})\cap\dots\cap f_n^{-1}(y_{i_n,n}).
  $$
Then for each $i_1\in I_1$, \dots, $i_n\in I_n$ the set $B_{i_1\dots i_n}$ is functionally ambiguous of the class $\alpha$ in $E$ and the family $(B_{i_1,\dots, i_n}:i_1\in I_1,\dots, i_n\in I_n)$ is a partition of $E$ for every $n\in\mathbb N$. By Proposition~\ref{lemma:amb2} we choose a sequence of systems of functionally ambiguous sets $D_{i_1\dots i_n}$ of the class $\alpha$ in $X$ such that
 \renewcommand{\theenumi}{\arabic{enumi}}
  \begin{enumerate}
    \item $D_{i_1,\dots, i_n}\cap E=B_{i_1,\dots, i_n}$ for every $n\in\mathbb N$ and $(i_1,\dots, i_n)\in I_1\times\dots\times I_n$;

    \item $(D_{i_1,\dots, i_n}: i_1\in I_1,\dots,i_n\in I_n)$ is a partition of $X$ for every $n\in\mathbb N$.
  \end{enumerate}
For all $n\in\mathbb N$ and $(i_1, \dots, i_n)\in I_1\times\dots\times I_n$ let
  \begin{enumerate}
  \setcounter{enumi}{2}
   \item $D_{i_1,\dots, i_n}=\O,\,\,\mbox{if}\,\, B_{i_1,\dots, i_n}=\O$.
  \end{enumerate}

Notice that the system $(B_{i_1,\dots, i_n, i_{n+1}}: i_{n+1}\in I_{n+1})$ forms a partition of the set
  $B_{i_1,\dots, i_n}$  for every $n\in\mathbb N$.

For all $i_1\in I_1$ let
  $$
  C_{i_1}=D_{i_1}.
  $$
Assume that for some $n\ge 1$ the system $(C_{i_1,\dots, i_n}:i_1\in I_1,\dots, i_n\in I_n)$ of functionally ambiguous sets of the class  $\alpha$ in $X$ is already defined and
   \renewcommand{\theenumi}{\Alph{enumi}}
   \begin{enumerate}
     \item $B_{i_1,\dots, i_n}=E\cap C_{i_1,\dots, i_n}$;

     \item $(C_{i_1,\dots, i_n}:i_1\in I_1,\dots, i_n\in I_n)$ is a partition of $X$;

     \item $C_{i_1,\dots, i_n}=\O$ if $B_{i_1,\dots, i_n}=\O$;

     \item $(C_{i_1\dots i_{n-1}i_n}:i_n\in I_n)$ is a partition of the set $C_{i_1,\dots, i_{n-1}}$.
   \end{enumerate}
   Fix $i_1,\dots,i_n$. Since the set $K=C_{i_1,\dots,i_n}\setminus \bigcup\limits_{k\in I_{n+1}}D_{i_1,\dots, i_n, k}$ is of the $\alpha$'th functionally multiplicative class in $X$ and $K\cap E=\O$, there exists a set $H$ of the $\alpha$'th functionally multiplicative class in $X$ such that $E\subseteq H\subseteq X\setminus K$. Using \cite[Lemma 2.1]{K4} we obtain that there exists a sequence $(A_k)_{k=1}^\infty$ of disjoint functionally ambiguous sets of the class $\alpha$ in $X$ such that
   $$
   C_{i_1,\dots, i_n}\setminus H=\bigcup\limits_{k=1}^\infty A_k.
   $$
Let
   $$
   M_{i_1,\dots, i_n,i_{n+1}}=\O, \quad\mbox{if}\quad D_{i_1,\dots i_n,i_{n+1}}=\O,
   $$
and
   $$
   M_{i_1,\dots i_n,i_{n+1}}=(A_{i_{n+1}}\cup D_{i_1,\dots, i_n,i_{n+1}})\cap C_{i_1,\dots, i_n}, \quad\mbox{if}\quad D_{i_1,\dots, i_n,i_{n+1}}\ne\O.
   $$
Now let
   $$
   C_{i_1,\dots, i_n, 1}= M_{i_1,\dots, i_n, 1},$$
and
$$
C_{i_1,\dots, i_n, i_{n+1}}=M_{i_1,\dots, i_n, i_{n+1}}\setminus
   \bigcup\limits_{k<i_{n+1}}M_{i_1,\dots, i_n, k} \quad\mbox{if}\,\, i_{n+1}>1.
   $$
Then for every $n\in\mathbb N$ the system $(C_{i_1,\dots, i_n}:i_1\in I_1,\dots,i_n\in I_n)$ of functionally ambiguous sets of the class $\alpha$ in $X$ has the properties (A)--(D).

For each $n\in\mathbb N$ and $x\in X$ let
  $$
  g_n(x)=y_{i_n,n},
  $$
if $x\in C_{i_1,\dots, i_n}$.
It is not hard to prove that $g_n\in K_\alpha(X,Y)$.

We show that the sequence $(g_n)_{n=1}^\infty$ is uniformly convergent on $X$. Indeed, let $x_0\in X$ and $n,m\in\mathbb N$.
Without loss of generality, we may assume that $n\ge m$. By the property~(B), $x_0\in C_{i_1,\dots, i_n}\cap C_{j_1,\dots, j_m}$. It follows from  (B) and (D) that $i_1=j_1$,\dots, $i_m=j_m$. Take an arbitrary point $x$ from the set $B_{i_1,\dots, i_n}$, the existence of which is guaranteed by the property~(C). Then $f_m(x)=y_{i_m,m}=g_m(x_0)$ and $f_n(x)=y_{i_n,n}=g_n(x_0)$.  Since the sequence $(f_n)_{n=1}^\infty$ is uniformly convergent on $E$, $\lim\limits_{n,m\to\infty} d(y_{i_m,m},y_{i_n,n})=0$. Hence, the sequence $(g_n)_{n=1}^\infty$ is uniformly convergent on $X$.

Since $Y$ is a complete space, for all $x\in X$ define $g(x)=\lim\limits_{n\to\infty}g_n(x)$. According to the property~(A), $g(x)=f(x)$ for all
  $x\in E$. Moreover, $g\in K_\alpha(X,Y)$ as a uniform limit of functions from the class $K_\alpha$.
\end{proof}

\section{Open problems}

\begin{question}
Does there exist a completely regular not perfectly normal space in which any functionally $G_\delta$-set is $1$-embedded?
\end{question}

\begin{question}
Does there exist a completely regular not perfectly normal space in which any set is $1$-embedded?
\end{question}

\begin{question}
Do there exist a normal space and its functionally $G_\delta$-subset which is not $1$-embedded?
\end{question}

\begin{question}
 Do there exist a topological space $X$ and its subspace $E$ such that $E$ is $K_1^*$-embedded and is not $K_1$-embedded in $X$?
\end{question}

\section{Acknowledgments} The author would like to thank the referee for his helpful and constructive comments that greatly contributed to improving the final version of the paper.

\end{document}